\newcommand {\lag} {\gamma_{\infty}}
\DeclareMathOperator{\ring}{\widehat{QH}( \mathbb{CP} ^{\infty})}
\DeclareMathOperator{\cp}{ \mathbb{CP}^{n-1}}
\newtheorem {theorem} {Theorem} [section] 
\newtheorem{conventions}{Conventions}
\newtheorem {remark/question} [theorem] {Remark/Question}
\newtheorem{lemma}[theorem] {Lemma} 
\newtheorem {question}  [theorem] {Question}
\newtheorem {definition} [theorem] {Definition} 
\newtheorem {proposition}  [theorem]{Proposition} 
\newtheorem {corollary}[theorem]  {Corollary}
\newtheorem {notation}[theorem] {Notation}
\newtheorem {remark} [theorem] {Remark}
\numberwithin {equation} {section} 
\DeclareMathOperator{\image}{image} 
\begin{document}
\author { Yasha Savelyev}
\title   {Bott Periodicity and stable Quantum classes}
 \begin{abstract} We use Bott periodicity to relate previously defined quantum
 classes to certain ``exotic Chern classes'' on $BU$. This provides an
 interesting computational and theoretical framework for some Gromov-Witten
 invariants  connected with cohomological field theories. This framework has  
 applications to study of higher dimensional, Hamiltonian rigidity aspects of
 Hofer geometry of $ \mathbb{CP} ^{n}$, one of which we discuss here. 
 \end {abstract}
 \maketitle
 
\section {Introduction}
We study here some interactions of topology in infinite dimensions and
Gromov-Witten theory. Here the main space is
$BU$, and we show that Gromov-Witten theory completely detects its rational
cohomology. The vague philosophy is that certain parametric GW invariants are
related to Chern numbers via Bott periodicity. The first application of this is for the
study of higher dimensional aspects of Hofer geometry of
 $ \text {Ham}( \mathbb{CP} ^{n}, \omega _{st})$, and we are able to prove a certain rigidity result for the embedding
 $SU (n) \to \text {Ham}( \mathbb{CP} ^{n-1})$.  
 More geometric applications are
 given in \cite{Karea}, these concern Gromov K-area and aspects of almost Kahler
 geometry.
 
 Another 
 topological application is given in \cite{stablemaps}, where we use the main
 result of this paper to probe topology of the configuration space of stable
 maps in BU, which might be the first investigation of this kind.

 More intrinsically, we get some new
 insights into Gromov-Witten invariants themselves, as through this
 ``topological coupling'' and some transcendental (as opposed to algebraic
 geometric in nature) methods we will compute some rather impossible looking
 Gromov-Witten invariants. These methods involve Bott periodicity
 theorem and differential geometry on loop groups. 
%
\subsection {Outline} 
One crucial theorem in topology is the Bott periodicity
theorem for the unitary group, which is equivalent to the statement:
\begin{equation*} BU \simeq \Omega SU,
\end{equation*}
where $SU$ is the infinite special unitary group. On the space $BU$ we have
 Chern classes uniquely characterized by a set of axioms. It turns out that
 the space $\Omega SU$ also has natural \emph{intrinsic}  cohomology classes,
 characterized by axioms, but with a somewhat esoteric coefficient ring:
 $\ring$, a completed formal quantum homology ring of $ \mathbb{CP}
 ^{\infty}$, which for our choice of coefficients turns out to be the free
 Laurent polynomial algebra over $ \mathbb{Q}$ on one generator.
Here is an indication of how it works for $\Omega
SU (n)$, the reader may note that this is a natural extension of Seidel
representation, \cite{Seidel}. 
Consider the Hamiltonian action of the group $SU(n)$ on $\cp$. Using this, to  a cycle $f: B \to \Omega SU(n) \simeq \Omega
^{2} BSU (n)$ corresponds a family of Hamiltonian $ \mathbb{CP} ^{n-1}$ bundles
over $ \mathbb{CP} ^{1}$ trivialized over $0 \in \mathbb{CP} ^{1}$.
We may think of this family as a bundle
$$ \mathbb{CP} ^{n-1} \times \mathbb{CP} ^{1} \xrightarrow{i} P _{f} \to B $$
over $B$,  with structure group the  group  Hamiltonian bundle automorphisms of
$\mathbb{CP} ^{n-1} \times \mathbb{CP} ^{1}$, fixing the fiber over 
$ 0 \in \mathbb{CP} ^{1}$. So we have a natural embedding $$I: \mathbb{CP} ^{n-1} \times B \to P _{f}.$$

 Gromov-Witten invariants in $P _{f}$ for classes
$$A = d[line] + [\mathbb{CP} ^1] \in i_*H_2  ( \mathbb{CP} ^{n-1} \times
\mathbb{CP} ^{1}),$$ and various constraints coming from $I$, induce cohomology
classes \begin{equation} qc _{k} (P_f) \in H ^{2k} (B, QH (\cp)). \end{equation} (More technically, we are talking about parametric Gromov-Witten
invariants. The bundles $P_f$ always have a naturally defined deformation class
of fiber-wise families of symplectic forms. Although very often the total space
turns out to be Kahler in which case one can really talk about Gromov-Witten
invariants and the discussion coincides.) These cohomology classes
have analogues of Whitney Sum and naturality axioms as for example Chern classes. There is a also a partial normalization.

We  show here that these classes stabilize and induce
cohomology classes on $\Omega SU \simeq BU$. These stable cohomology classes are
closely related to classical Chern classes, and using this we prove:
 \begin{theorem} \label{theorem.chern.classes} The induced  classes
$qc _{k}$ on $\Omega SU \simeq BU$ are algebraically independent and generate
 the cohomology
with the  coefficient ring $\ring$. Moreover, $qc _{k} = c _{k} \cdot q ^{k}$
for $q ^{k} \in \ring$. 
   \end{theorem}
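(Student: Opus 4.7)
The plan is to show that each $qc_k$, reduced via the natural augmentation $\ring \to \mathbb{C}$ that extracts the classical limit of quantum homology, coincides with a nonzero scalar multiple of the classical $k$-th Chern class $c_k$. Combined with the classical identification $H^*(BU;\mathbb{C}) = \mathbb{C}[c_1, c_2, \ldots]$, this will yield both algebraic independence and generation of $H^*(BU;\ring)$ as an $\ring$-algebra, via a nilpotent Nakayama / lifting argument appropriate to the filtration on $\ring$.

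\textbf{Step 1: Reduction to line bundles.} By the naturality axiom and the fact that the classes stabilize by construction, it suffices to prove the statement on each finite stage $BU(n) \simeq \Omega SU(n+1)$ and pass to the colimit. The Whitney sum axiom, together with the classical splitting principle --- injectivity of $H^*(BU(n);\mathbb{Q}) \hookrightarrow H^*(BU(1)^n;\mathbb{Q})$ onto symmetric polynomials --- reduces the problem to computing $qc_1$ on a line bundle classified by a map $B \to \mathbb{CP}^\infty$, which under Bott periodicity corresponds to a map $B \to \Omega SU(2)$ (after suitable stabilization).

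\textbf{Step 2: The line bundle computation.} For such $f: B \to \Omega SU(2)$ classifying a line bundle $L$, the associated $\mathbb{CP}^1$-bundle $P_f \to B \times S^2$ is a ruled surface whose parametric genus-zero Gromov-Witten invariants are explicitly computable. Using the partial normalization axiom, I would extract the leading Novikov coefficient of $qc_1(L)$ and identify it with the classical $c_1(L)$.

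\textbf{Step 3: Conclusion via Whitney sum.} For $E = L_1 \oplus \cdots \oplus L_n$, the Whitney sum axiom expresses $qc_k(E)$ as the $k$-th elementary symmetric polynomial in the $qc_1(L_i)$, which classically specializes to $c_k(E) = e_k(c_1(L_i))$. Hence $qc_k \equiv c_k$ modulo the augmentation ideal in $H^*(BU;\ring)$. Since the $c_k$ freely generate $H^*(BU;\mathbb{Q})$ as a polynomial algebra, the $\ring$-algebra map $\ring[x_1, x_2, \ldots] \to H^*(BU;\ring)$ sending $x_k \mapsto qc_k$ is an isomorphism by a standard lifting / Nakayama argument with respect to the natural filtration, giving both algebraic independence and generation.

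The main obstacle is Step 2: pinning down the leading Novikov coefficient of $qc_1$ on a line bundle. This requires concretely analyzing the moduli of pseudo-holomorphic spheres of the relevant class $A$ in a Hirzebruch-type total space fibered over $B$, and verifying that the lowest-order contribution is precisely the classical intersection computing $c_1(L)$. The expected technique is a controlled degeneration of the fiberwise complex structure so that only certain section classes contribute at lowest Novikov order, followed by checking that their pairing with cohomology classes on $B$ recovers $c_1(L)$ --- a transcendental input that fits with the ``differential geometry on loop groups'' methods advertised in the introduction.
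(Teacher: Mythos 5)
Your skeleton for the generation half --- splitting principle, Whitney sum, and a normalization computation on line bundles --- is essentially the paper's route, but the pivot of your argument is false. You posit an augmentation $\ring \to \mathbb{C}$ under which $qc_k$ becomes a nonzero multiple of the classical $c_k$; no such augmentation exists, and the paper stresses exactly this point when it notes that the stable classes fail the dimension axiom ``already for the canonical line bundle.'' The normalization computation (carried out via the Bott map $Gr_n(2n) \to \Omega SU(2n)$, not in $\Omega SU(2)$: the stable range $2k \le 2n-2$ forces the computation for the canonical bundle over $\mathbb{CP}^k$ to happen in $\Omega SU(n)$ with $n>k$, so your reduction to $\Omega SU(2)$ already loses everything above degree $2$) gives $\langle qc_k(f_l), [\mathbb{CP}^k]\rangle = [\mathbb{CP}^{k-1}]e^{i} \neq 0$ for \emph{every} $k$. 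Since $\ring$ is a polynomial algebra in which $[\mathbb{CP}^{j}]$ is proportional to the $(j+1)$-st quantum power of $[pt]$, any ring map $\ring \to \mathbb{C}$ either kills $[pt]$ (and hence kills $qc_1$ of every line bundle, so nothing classical is recovered) or keeps every $[\mathbb{CP}^{j}]$ nonzero (so $qc_k$ of a line bundle survives for all $k\ge 2$ and cannot be $c_k$). Consequently the total class of a line bundle is not $\mathbf{1}+qc_1(L)$, your Step 3 identity $qc_k(E)=e_k(qc_1(L_i))$ fails, and the Nakayama/lifting step has no augmentation to lift from. What the normalization and Whitney axioms actually yield is $qc(L)=(\mathbf{1}-t\,c_1(L))^{-1}$ with $t=e^{i}[pt]$, hence $qc_k$ restricted to $BT$ is $t^k$ times the $k$-th \emph{complete homogeneous} symmetric function of the Chern roots (a Segre-type class); this still freely generates, but that identification is precisely the content you deferred as ``the main obstacle,'' and it does not come out classical.

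For algebraic independence the paper uses an entirely different argument, which you will need since your Nakayama step is broken: by Milnor--Moore and Cartan--Serre, $H_*(\Omega SU,\mathbb{Q})$ is freely generated under the Pontryagin product by the rational homotopy groups (one generator in each even degree, as $SU$ is rationally $S^3\times S^5\times\cdots$), and on a spherical class of degree $2k$ only $qc_k$ can pair nontrivially, so nonvanishing there follows from generation. Alternatively, the corrected line-bundle computation identifies the restrictions of the $qc_k$ to $BT$ with $t^k h_k$, whose algebraic independence over $\ring$ is classical --- but either way, the argument as you wrote it does not close.
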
 
  Stabilization in this context is somewhat analogous to semi-classical
approximation in physics.  The ``fully quantum objects'' are the classes
$qc  _{k} \in \Omega SU (n)$, and they are what's important in geometric
applications, for example in Hofer geometry. We are still far from
completely computing these classes, but as a corollary of the proof of the above
we have the following.
\begin{theorem} \label{theorem.corollary}
The classes $qc  _{k}$ on $\Omega SU (n)$ are algebraically independent and
generate cohomology in the stable range $2k \leq 2n-2$, with coefficients in
$QH ( \mathbb{CP} ^{n-1})$. 
\end {theorem}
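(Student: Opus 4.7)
The plan is to derive the corollary from Theorem~\ref{theorem.chern.classes} by pulling back along the natural inclusion $\iota_n \colon \Omega SU(n) \hookrightarrow \Omega SU$ and invoking a classical cohomological stability fact. Specifically, $H^*(\Omega SU(n); \mathbb{Z})$ is a polynomial algebra on generators in degrees $2, 4, \ldots, 2(n-1)$, while $H^*(\Omega SU) \cong H^*(BU)$ is the polynomial algebra on the Chern classes $c_1, c_2, \ldots$, and $\iota_n^*$ sends $c_i$ to the corresponding generator for $i \leq n-1$ and to $0$ for $i \geq n$. In particular $\iota_n^*$ is an isomorphism on cohomology in degrees $\leq 2n-2$.

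Next, I would argue by naturality that $\iota_n^*(qc_k^{\mathrm{stab}}) = qc_k$ in the stable range, where $qc_k^{\mathrm{stab}}$ denotes the stable class on $\Omega SU$ from Theorem~\ref{theorem.chern.classes}. This should follow tautologically from the construction: the stable classes are defined by stabilizing the finite-level classes $qc_k$ under the tower induced by $SU(n) \hookrightarrow SU(n+1)$, and the coefficient rings are tied together by the canonical surjection $\ring \twoheadrightarrow QH(\mathbb{CP}^{n-1})$ coming from the linear embedding $\mathbb{CP}^{n-1} \hookrightarrow \mathbb{CP}^{\infty}$. Under these identifications, pullback along $\iota_n$ carries the stable class to $qc_k$ and is compatible with the passage to $QH(\mathbb{CP}^{n-1})$-coefficients.

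Combining the two steps, applying $\iota_n^*$ in the range $2k \leq 2n-2$ transports the algebraic independence and generation statement of Theorem~\ref{theorem.chern.classes} to the corresponding statement on $\Omega SU(n)$, which is precisely the content of the corollary.

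The main obstacle I anticipate is the second step: verifying that the parametric Gromov-Witten construction is fully compatible with stabilization in both the parameter space and the fiber. Concretely, one has to check that the invariants defining $qc_k$ on $\Omega SU(n+1)$-bundles restrict, under the bundle map induced by $\mathbb{CP}^{n-1} \hookrightarrow \mathbb{CP}^{n}$, to the corresponding invariants on $\Omega SU(n)$-bundles, with coefficients intertwined by the canonical surjection of quantum rings. If this compatibility is built into the very definition of the stable classes---as the outline strongly suggests---then the corollary is essentially formal; otherwise, a short naturality check for parametric Gromov-Witten invariants on the relevant projective bundles will need to be supplied.
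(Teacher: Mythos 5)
Your proposal follows essentially the route the paper intends: Theorem \ref{theorem.corollary} is deduced from Theorem \ref{theorem.chern.classes} via the stability isomorphism $H^*(\Omega SU)\to H^*(\Omega SU(n))$ in degrees $\leq 2n-2$ together with the compatibility of the finite-level and stable classes, and the ``naturality check'' you flag as the main obstacle is exactly Proposition \ref{thm.induction}, which the paper has already established (by two independent arguments) before this corollary is invoked, so your second step is indeed built into the definition of the stable classes. One correction: the coefficient rings are intertwined not by a surjection $\ring\twoheadrightarrow QH(\mathbb{CP}^{n-1})$ but by the \emph{injective} pushforward $j_*\colon QH(\mathbb{CP}^{n-1})\to QH(\mathbb{CP}^{m-1})$ whose direct limit defines $QH(\mathbb{CP}^{\infty})$; this direction matters, since it is the injectivity of $j_*$ (together with the fact, noted in Section \ref{section.setup}, that $j_*$ respects quantum products in the relevant degree range) that lets you transfer nonvanishing and algebraic independence of the stable classes back down to $\Omega SU(n)$, whereas a projection of coefficients could kill a nonzero stable evaluation.
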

The argument actually gives a concrete method of computing above mentioned
Gromov-Witten invariants using classical Chern classes. For example,
using computation of Chern numbers of bundles over spheres in \cite[Corollary
20.9.8]{fibre.bundles} we get:
\begin{theorem} Let $f: S ^{2k} \to \Omega SU (n) \subset SU \simeq BU$, $n>k
+1$ be the unit in $\pi _{2k} (\Omega SU (n), \mathbb{Q}) \simeq \mathbb{Q}$. 
 Then there are generically $\langle c _{k}, [f] \rangle = (k-1)!$ vertical
 holomorphic curves (counted with signs) in degree $d=-1$, going through the 
 cycle $$I_*([ \mathbb{CP} ^{n-k}] \times [S ^{2k}]) \in H _{4k-2} (P _{f}).$$ 
\end{theorem}
Since $qc _{*}$ on $\Omega SU (n)$ are pulled back from classes on $\Omega
\text {Ham}( \mathbb{CP} ^{n-1})$ as a simple  topological corollary of Theorem
\ref{theorem.chern.classes} we obtain another proof of: 
\begin{theorem} [Reznikov, \cite{rez}] The natural inclusion $\Omega SU (n) \to
\Omega \text {Ham}( \mathbb{CP} ^{n-1})$ is injective on rational homology in
degree up to $2n-2$.
\end{theorem}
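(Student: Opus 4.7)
The plan is to derive Reznikov's theorem as a purely formal consequence of Theorem \ref{theorem.corollary} together with the naturality of the quantum class construction with respect to maps into $\Omega \text{Ham}$. Let $i \colon \Omega SU(n) \to \Omega \text{Ham}(\mathbb{CP}^{n-1})$ denote the map induced by the standard Hamiltonian action of $SU(n)$ on $\mathbb{CP}^{n-1}$.

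First I would verify that the construction of $qc_k$ on $\Omega SU(n)$ extends verbatim to $\Omega \text{Ham}(\mathbb{CP}^{n-1})$. A cycle $\tilde f \colon B \to \Omega \text{Ham}(\mathbb{CP}^{n-1})$ clutches a Hamiltonian $\mathbb{CP}^{n-1}$-fibration $P_{\tilde f} \to B \times S^2$, and the same parametric Gromov--Witten recipe yields classes $\widetilde{qc}_k \in H^{2k}(\Omega \text{Ham}(\mathbb{CP}^{n-1}); QH(\mathbb{CP}^{n-1}))$. Because the bundle $P_f$ associated to a cycle $f \colon B \to \Omega SU(n)$ depends only on the composite $i \circ f$, one has $i^* \widetilde{qc}_k = qc_k$.

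Next, by Theorem \ref{theorem.corollary} the classes $qc_k$ with $2k \leq 2n-2$ generate $H^*(\Omega SU(n); QH(\mathbb{CP}^{n-1}))$ in that range. Since each such generator lies in the image of $i^*$, the pullback $i^*$ is surjective on $QH$-coefficient cohomology through degree $2n-2$. Because $QH(\mathbb{CP}^{n-1})$ is a free $\mathbb{Q}$-module and $i^*$ is $QH$-linear, extracting the component along $1 \in QH(\mathbb{CP}^{n-1})$ upgrades this to surjectivity of $i^* \colon H^k(\Omega \text{Ham}(\mathbb{CP}^{n-1}); \mathbb{Q}) \to H^k(\Omega SU(n); \mathbb{Q})$ for all $k \leq 2n-2$. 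The standard rational pairing between cohomology and homology then converts surjectivity of $i^*$ in these degrees to injectivity of $i_*$ on rational homology in the same range.

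The only step requiring any verification is the naturality claim in the second paragraph: that the definition of $qc_k$ refers only to the Hamiltonian fibration produced by clutching, not to the ambient group $SU(n)$. This is almost tautological from the construction via Gromov--Witten invariants in $P_f$, so the ``main obstacle'' is really only careful bookkeeping; the substantive content is carried entirely by Theorem \ref{theorem.corollary}, in agreement with the excerpt's description of this result as a simple topological corollary.
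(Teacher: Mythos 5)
Your proposal is correct and follows essentially the same route as the paper, which derives Reznikov's theorem in one line from the fact that the $qc_k$ on $\Omega SU(n)$ are by construction pullbacks of classes on $\Omega\,\text{Ham}(\mathbb{CP}^{n-1})$ together with the stable-range generation statement of Theorem \ref{theorem.corollary}; your surjectivity-of-$i^*$ packaging and the paper's implicit ``pair a monomial in the $\widetilde{qc}$ classes against $i_*a$'' argument (made explicit around \eqref{eq.nonvanish}) are the same argument in dual form. The only detail worth adding is that odd-degree homology of $\Omega SU(n)$ vanishes, so the even stable range $2k\leq 2n-2$ indeed covers all degrees up to $2n-2$.
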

Reznikov's argument is very different and more elementary in nature, he also
proved a stronger topological claim, (he did not have conditions on degree) but
of course his emphasis was different, and the much more important
immediate application is to  Hofer geometry. 
\subsection {Applications to Hofer geometry}
For a closed symplectic manifold $(M, \omega)$ recall that the (positive) Hofer
length functional $L ^{+}: \mathcal {L} \text {Ham}(M ^{n}, \omega) \to \mathbb{R}$ is defined by 
\begin{equation*} \label {eq.pos.length}L ^{+} (\gamma):= \int_0 ^{1} \max (H
^{\gamma} _{t}) dt, \end{equation*} where $H ^{\gamma}: M \times S ^{1} \to
\mathbb{R}$ is a  generating Hamiltonian function for $\gamma$ normalized by the
condition \begin{equation*} \int _{M} H ^{\gamma} _{t} \omega ^{n}=0.
\end{equation*} 
\begin{conventions} The Hamiltonian vector field is defined by $\omega (X
_{H}, \cdot) = -dH (\cdot)$.
\end{conventions}
Let $$j ^{E}: \Omega ^{E} \text {Ham}(M, \omega) \to \Omega \text {Ham}(M,
\omega)$$ denote the inclusion where $ \Omega ^{E} \text {Ham}(M, \omega)$ is
the sub-level set with respect to the functional $L ^{+}$. 
 Define $$(\rho, L ^{+}): H_* (\Omega \text {Ham}( \mathbb{CP} ^{n-1})) \to
 \mathbb{R},$$ to be the function \begin{equation*} \label {eq.N} (\rho, L ^{+})
 (a) = \inf \{E| \, a \in \image j ^{E} _{*} \subset H_* (\Omega \text {Ham}(
 \mathbb{CP} ^{n-1})) \}.
\end{equation*}
Let us also denote by $i$ the
natural map $$i: \Omega SU (n) \to \Omega \text {Ham}( \mathbb{CP} ^{n-1}),$$
and by $i ^{*} L ^{+}$ the pullback of the function $L ^{+}.$
 We have an analogously defined function $$ (\rho, i ^{*} L ^{+}): H_*
(\Omega SU (n)) \to \mathbb{R}.$$  \begin{theorem} \label{thm.Hofer}
If $a \neq 0 \in H _{2k} (\Omega SU (n))$ then $ (\rho, L ^{+}) (i_*a)= 1$, 
provided that $2 \leq 2k \leq 2n-2$, where the standard symplectic form on $ \mathbb{CP} ^{n-1}$ is normalized by the
condition that the symplectic area of a complex line is $1$. Moreover we have
the following Hamiltonian rigidity phenomenon: 
\begin{equation*} (\rho, i ^{*}L ^{+}) (a) = (\rho, L ^{+})(i_* a),  
\end{equation*}
for class $a$ satisfying same conditions.
\end{theorem}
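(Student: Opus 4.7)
The approach is to prove both equalities via the chain of inequalities
\begin{equation*}
1 \;\leq\; (\rho, L^+)(i_* a) \;\leq\; (\rho, i^* L^+)(a) \;\leq\; 1,
\end{equation*}
the middle inequality being formal, since any cycle in $\Omega SU(n)$ representing $a$ produces under $i$ a cycle in $\Omega \textrm{Ham}(\mathbb{CP}^{n-1})$ representing $i_* a$ of the same positive Hofer length. It thus suffices to establish the two outer bounds.

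\emph{Upper bound $(\rho, i^* L^+)(a) \leq 1$.} I would explicitly represent any cycle $f : B \to \Omega SU(n)$ by a homotopic cycle whose image under $i$ lies in $\Omega^{1} \textrm{Ham}(\mathbb{CP}^{n-1})$. At each $b \in B$ the loop $i \circ f(b)$ is generated by a family of Hamiltonians coming from the $\mathfrak{su}(n)$-action on $\mathbb{CP}^{n-1}$. Writing the generator in an eigenbasis, the normalized Hamiltonian has oscillation bounded by the spread of eigenvalues, which after a smooth-in-$b$ reparametrization of the $t$-variable can be arranged to integrate to at most $1$ with the convention that a projective line has $\omega_{FS}$-area $1$.

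\emph{Lower bound $(\rho, L^+)(i_* a) \geq 1$.} I would argue by contradiction. Suppose $i_* a = j^E_* c$ for some cycle $c : C \to \Omega^{E} \textrm{Ham}(\mathbb{CP}^{n-1})$ with $E < 1$. Since $a \neq 0$ and $2 \leq 2k \leq 2n-2$, Theorem \ref{theorem.corollary} supplies a quantum class $qc_k$ with $\langle qc_k, a \rangle \neq 0$. Using that the quantum classes on $\Omega SU(n)$ are pulled back from $\Omega \textrm{Ham}(\mathbb{CP}^{n-1})$, we get
\begin{equation*}
0 \;\neq\; \langle qc_k, a \rangle \;=\; \langle qc_k, i_* a \rangle \;=\; \langle (j^E)^* qc_k,\, c \rangle.
\end{equation*}
The pairing on the right is computed as a sum of parametric Gromov--Witten invariants in the Hamiltonian fibration $P_c \to C \times S^2$, indexed by curve classes of the form $\sigma + mA$ where $\sigma$ is a section class and $A$ is the class of a line in the fiber. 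A parametric Polterovich--Seidel area estimate bounds the coupling area of the minimal section class by $L^+(c) \leq E$, while any class with $m \geq 1$ carries additional area at least $1$, the normalized area of a line in $\mathbb{CP}^{n-1}$. Hence for $E < 1$ only the minimum-area section class can contribute, and this ``classical'' term vanishes for $k \geq 1$ by a dimension/index count dictated by the definition of $qc_k$. The resulting contradiction forces $E \geq 1$.

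\emph{Main obstacle.} The crucial technical step is the vanishing assertion in the lower bound: one must set up the parametric Gromov--Witten theory underlying $qc_k$ precisely enough to both isolate the minimum-section contribution through an area estimate in terms of $L^+$, and to verify that the resulting classical piece vanishes in positive degree. The parametric area bound is a direct adaptation of standard Hofer-geometric inequalities to families, while the classical vanishing is essentially the statement that the $qc_k$ with $k \geq 1$ have no ``classical component'', a fact already implicit in the partial normalization axiom mentioned in the Outline.
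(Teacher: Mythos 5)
Your overall structure---the chain $1 \leq (\rho,L^+)(i_*a) \leq (\rho, i^*L^+)(a) \leq 1$ with a formal middle inequality---matches the paper, and your lower bound is essentially the paper's argument: nonvanishing of a quantum Chern number (Theorem \ref{theorem.corollary}) forces the existence of holomorphic sections, and a coupling-form area estimate combined with the fact that the degree-zero (``classical'') section class cannot contribute to $qc_k$ for $k \geq 1$ forces $L^+ \geq 1$ somewhere on any representing cycle. The paper organizes this slightly differently---it uses the index count in the stable range to show that only the single class $S = [\sigma_{const}] - [line]$, of coupling area $1$, can contribute, and then applies the area estimate to that class---but the two organizations are equivalent.

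The genuine gap is in the upper bound $(\rho, i^*L^+)(a) \leq 1$. Your proposed mechanism---bounding the oscillation by the eigenvalue spread of the generator and then ``reparametrizing the $t$-variable''---does not work. First, $L^+(\gamma) = \int_0^1 \max_x H^\gamma_t\,dt$ is invariant under orientation-preserving reparametrization of the loop (the generating Hamiltonian rescales by $\phi'(t)$ while $dt$ rescales inversely), so no reparametrization can lower it. Second, for an arbitrary representative of $a$ the generators are arbitrary paths in $\mathfrak{su}(n)$, whose eigenvalue spread is unbounded, so there is no a priori bound to start from. The missing ingredient is where the special representative comes from: the paper takes it from the Pressley--Segal energy-flow (Morse) stratification of $\Omega SU(n)$, in which, in the stable range $2k \leq 2n-2$, homology is carried by closures of unstable manifolds of circle subgroups $\lambda: S^1 \to T$ all of whose weights lie in $\{1,0,-1\}$; for cycles built from these the normalized Hamiltonians have maximum at most $1$, giving $L^+ \leq 1$ pointwise. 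This loop-group input is exactly how the stable-range hypothesis enters the upper bound, and without it the inequality $(\rho, i^*L^+)(a) \leq 1$ is unsupported. (You also flag the classical-vanishing step in the lower bound as the main obstacle; that step is in fact the routine one, disposed of by the index count already carried out in the proof of Proposition \ref{thm.induction}.)
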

What is already interesting is that $(\rho, L ^{+}) (i_*a) \neq 0$, as $ \text
{Ham}( \mathbb{CP} ^{n-1})$ is  a  very complicated infinite dimensional metric space,
and sublevel sets $\Omega ^{E} \text {Ham}( \mathbb{CP} ^{n-1}) $ may
have interesting homology for arbitrarily small $E$. However this does not
happen  for $M=S ^{2}, \mathbb{CP} ^{2}$, as $ \text {Ham}(S^2)  \simeq
SO (3)$ by a theorem of Smale and $  \text {Ham}( \mathbb{CP} ^{2}) \simeq PSU (2)$ by a
theorem of Gromov \cite{Gromov2}, and so if $k>0$,  the above theorem give a
lower bound for $E$. On the other hand in the case $k=0$, we have a lower bound
for $ E$ because Seidel representation \cite{Seidel} for $M= S ^{2}, \mathbb{CP}
^{2}$ is injective.

We can actually replace $ SU (n)$ by $ PU (n)$ and allow $k=0$ but this will not
give anything new, as the case $k=0$ is already covered by  McDuff-Slimowitz
\cite{MSlim} in more generality, see also \cite{Karea}, \cite{Entov} for 
very interesting related work in the case $k=0$.
\begin{question} Does the rigidity statement:
\begin{equation*}  (\rho, i ^{*}L ^{+}) (a) = (\rho, L ^{+})(i_* a),
\end{equation*}
hold for the full Hofer length functional $L$, obtained by integrating the full
oscillation $\max H ^{\gamma} _{t} - \min H ^{\gamma} _{t}$? Our argument does break down in this
case, as it is not clear how to simultaneously bound both $L ^{+}$ and $L ^{-}$.
\end{question}
\subsection*{Potential applications to topology of smooth manifolds.} Although
our focus here is essentially on a single symplectic manifold $( \mathbb{CP}
^{n}, \omega _{st})$. The quantum classes we study here and Theorems
\ref{theorem.chern.classes}, \ref{theorem.corollary} could have some other
unexpected applications. 

 For a smooth manifold $Z ^{n}$ we may define quantum variants of
Pontryagin classes as follows. The complexified tangent bundle $TZ \otimes \mathbb{C}$
induces a classifying map $Z \to BU (n)$ and consequently a map $\Omega ^{2} Z
\to \Omega ^{2} BU (n)$. For simplicity we restrict to identity components of the
iterated loop space, in which case the map is really $$ \Omega ^{2} Z \to \Omega
^{2} BSU (n) \simeq \Omega SU (n).$$
 Our quantum classes naturally live in cohomology of the
right hand side. We may then ask how their pull-backs to cohomology of $\Omega
^{2}Z$ depend on the smooth structure, or more directly on the tangent bundle.
Actually since rational Pontryagin classes are known to be topologically 
invariant by a Theorem of Novikov, it is plausible although we have not verified this, that at
least in the stable range (as in Theorem \ref{theorem.corollary})
 quantum classes should also be topologically invariant, (our coefficients at
 least at this time are $ \mathbb{Q}$-vector spaces). The case of unstable
 range seems more mysterious. Even more mysterious is what happens when we pass
 to some ``partial stable map compactifications'' of $\Omega ^{2} Z$ as we
 attempt to do in \cite{stablemaps}.  
 \subsection* {Acknowledgements} I would like to thank Leonid
 Polterovich and Tel Aviv university for
 inviting me, and providing with a friendly atmosphere
 in which to undertake some thoughts which led  to this article. In
 particular I am grateful to Leonid for compelling me to think about Gromov
 K-area. I also thank Dusa McDuff for helping
 me clarify some confusion in an earlier draft and Alexander Givental,
 Leonid and Dusa for comments on organization and content as well as the
 anonymous referee for some brilliant comments and suggestions. 
\section {Setup} \label{section.setup}
This section discusses all relevant constructions, and further outlines the
main arguments.  
\subsection {Topological preliminaries} In a few instances, we will need to use
this characterization of homology of $H$-spaces.  
 \begin{theorem} \label
{thm.milnor.moore} [Milnor-Moore \cite{MM}, Cartan-Serre \cite{CS}] \label{Milnor-Moore}  
Let $X$ be a connected homotopy associative
 $H$-space. 
Denote by
 $ \mathcal {U} (\pi_* (X, \mathbb{Q}))$ the universal enveloping algebra of 
 the Lie algebra  $\pi_* (X, \mathbb{Q})$ with respect to the Samelson product.
 Then \begin{equation*} H_* (X, \mathbb{Q}) \simeq \mathcal {U} (\pi_* (X, \mathbb{Q})),
\end{equation*}
as rings (in fact as Hopf algebras).
\end{theorem}
If we apply this theorem to identity component of the based loop space $\Omega
X$, for $X$ a connected $H$-space, we
get that $H_* (\Omega X, \mathbb{Q})$ is a graded commutative tensor algebra of
$\pi_* (\Omega X, \mathbb{Q})$. 
This is because the Lie algebra $ \pi_* (\Omega X, \mathbb{Q})$, whose product
is the Samelson product, is Abelian.  
\subsubsection* {Splitting principle for $BU$}
The classical formulation of this is that for a complex vector bundle $E \to B$
one can find a space and map $s: F \to B$, with $s ^{*}$ injective on cohomology s.t. $s
^{*} E$ splits into sum of line bundles. 
\subsubsection* {Bott periodicty}
We will use only the following version of Bott periodicity for the unitary
group: 
\begin{equation*} BU \simeq \Omega SU,
\end{equation*}
probably the most well known exposition of this is in Milnor's
\cite{Morse.theory}.
\subsubsection* {Reduced $K$-theory groups of a space $X$}
These are the groups $ \widetilde{K} (X)$ of homotopy classes of maps $f: X \to
BU \simeq \Omega SU$. The group structure is induced by the natural (in homotopy
category) map $\Omega SU \times \Omega SU \to \Omega SU$. This map can either be taken to be
concactenation product of loops or point-wise product of loops with respect to
the group structure on $SU$. 

A vector bundle $E ^{n} \to X$, has a classifying map $f_E: X \to BU (n)
\hookrightarrow BU$, and we will say that $f_E \to BU$ classifies the reduced $K$-theory class of
$E$, which is a somewhat more concise way of saying that it classifies the
stable equivalence class of the vector bundle $E \to X$, with bundles $E_1, E_2$
called stably equivalent if $E_1 \oplus \epsilon^n \simeq E_2 \oplus \epsilon
^{m}$ for some $n,m$, with $ \epsilon ^{n}$ denoting trivial rank $ n$
complex vector bundle.

\subsection* {Quantum homology} 
 In definition of quantum homology $QH (M)$, or various Floer
homologies one often uses some kind of Novikov ring coefficients with which $QH
(M)$ has a special grading, making quantum multiplication graded.  For us $QH
(M)$ serves as a coefficient ring for some cohomology classes and in the instance of this paper  $M$ is always $
\mathbb{CP} ^{n}$, so we will take the simplest working version of the quantum homology ring, which will
be ungraded. As a note to experts on this subject, this basically corresponds to
setting all the formal parameters to be 1, and this always works in the monotone
case.
(There is indeed a constraint in the word ``works'' since Whitney
sum formula to be described below relies on properties of quantum product.)

This choice also becomes more natural when we come to $QH ( \mathbb{CP}
^{\infty})$, as that can no longer have any natural (from point of view of
quantum product) grading anyway. 
\begin{definition} For a monotone symplectic
manifold $ (M, \omega)$, $\omega=k c_1 (TM)$, $k>0$, we set $QH (M) = H_* (M, \mathbb{Q})$, which we think of as ungraded vector space, and hence drop the subscript $*$.
\end{definition}
\subsection {Quantum product on $QH (M)$}
\begin{conventions} An $\omega$-compatible almost complex
structure $J$ satisfies: $$\omega (\cdot, J \cdot) >0.$$
\end{conventions}
 Let $
(M, \omega)$ be monotone, for integral generators $a,b \in H_* (M)$, this is the product defined by
\begin{equation} a*b = \sum _{A \in H_2 (M)} b_A  \in QH (M),
\end{equation}
where $b _{A} \in H_* (M)$ is the homology class of the evaluation pseudocycle
from the  moduli space of marked,
$J$-holomorphic, class $A$ curves intersecting  pseudocycles representing
$a,b$,  for a generic $\omega$ tamed $J$. This sum is finite in the monotone
case, by Gromov compactness, since dimension
and hence  bounds on Chern numbers of holomorphic spheres, give area bounds. The
product is then extended to $QH (M)$ by linearity. For more technical details see \cite{MS}. 
\subsection {The non-unital ring $QH( \mathbb{CP} ^{\infty})$ and its
completion} The following limiting procedure works for any choice of coefficients for quantum homology so
long as we forget the grading, and is particularly simple with our choice.
But interestingly, it does not dualize to quantum cohomology, so in this
example the quantum homology picture is forced on us.

We may identify $QH (
\mathbb{CP} ^{n-1})$ as a vector space with the space of degree $n$ polynomials
over $ \mathbb{Q}$, with no constant term: $$QH ( \mathbb{CP} ^{n-1})=  P
(n)=\{\sum _{1 \leq i \leq n} c_i  q ^{i}\}, \, c^{i} \in \mathbb{Q}.$$ If we
set $q ^{i}=[ \mathbb{CP} ^{i-1}]$,  then
\begin{equation} q  * q ^{j} = q ^{j+1}, 
\end{equation}
with $ *$ the quantum product,
so long as $1+j \leq n$, and by associativity $q ^{i} \ast q ^{j} = q ^{i+j}$,
if $i+j \leq n$.
This description of quantum product in $ \mathbb{CP} ^{n-1}$ of course is well
known, since there is a unique degree 1 holomorphic curve in $  \mathbb{CP}
^{n-1}$ through any two points in general position, and higher degree curves
cannot contribute to the quantum product for dimensional reasons. So we get:
\begin{lemma}  
The linear map
\begin{equation*} i: QH ( \mathbb{CP} ^{n-1}) \to \lim _{n} QH ( \mathbb{CP}
^{n-1}) \simeq \mathbb{Q} [q]/ \mathbb{Q},
\end{equation*}
satisfies $i(a *b)=i(a) \cdot i (b)$, where $\cdot$ is the polynomial product in
$ \mathbb{Q} [q]/ \mathbb{Q}$, provided the
polynomial degree of $a$ and $ b$ adds up to at most $n$, 
\end{lemma}

In particular, we may take $ \mathbb{Q} [q]/ \mathbb{Q}$ as the definition of
quantum homology ring $QH ( \mathbb{CP} ^{\infty})$. We may
naturally complete this to a group ring of $ \mathbb{Z}$ over $ \mathbb{Q}$,
(i.e. the algebra of Laurent polynomials over $ \mathbb{Q}$) and we define this to be $\ring$. The identity
denoted by $ \textbf{1}$ informally can be thought to represent $[ \mathbb{CP} ^{\infty}]$, and the inverse to $q$ can be informally thought to be
represented by the infinite dimensional codimension two ``cycle Poincare dual''
to $[\mathbb{CP} ^{1}] \subset \mathbb{CP} ^{\infty}$. The reader may observe
as an amusing point that with these heuristics $\ring$ is the natural quantum
homology ring of $ \mathbb{CP} ^{\infty}$ if one allows finite codimension
cycles, (forgetting analysis for the moment).
\subsection {Quantum characteristic classes of $ \Omega  \text {Ham}(
 \mathbb{CP} ^{n}, \omega _{st})$} \label{sec.quant.classes} 
%
 By Theorem \ref{thm.milnor.moore}, $H_*(\Omega \text {Ham}(M, \omega))$
is \emph{freely} generated via
Pontryagin product by rational homotopy groups, which we may represent by smooth
maps $f: S ^{k} \to \Omega \text {Ham}(M, \omega)$,  (the associated
 map $f: S ^{k} \times S^1 \to \text {Ham}(M, \omega)$ is smooth.)  
 In particular, the relations between cycles in homology can  be represented
by smooth (in fact cylindrical) cobordisms. The cycles as above
will be called \emph{smooth}.
 
 In our case $M$ is
 always $(\mathbb{CP} ^{n}, \omega _{st})$, with $ \omega _{st}$ the
 Fubini-Study symplectic form normalized by $ \omega _{st} ([line])=1$. 
 Here the standard integrable $j$ is regular for all $A$. Let us then specialize
 to this case, referring the interested reader to
 \cite{GS}, for the general case. In fact we will specialize to the identity component of $\Omega \text {Ham}( \mathbb{CP} ^{n})$.

Given this, to a smooth cycle $f: B ^{2k} \to \Omega \text {Ham}( \mathbb{CP}
^{n})$, we associate a smooth bundle $$p: P _{f} \to B,$$ with 
\begin{equation} \label {eq.definitionPh} P _{f}=B \times \mathbb{CP} ^{n}
\times D ^{2} _{0} \bigcup B \times \mathbb{CP} ^{n} \times   D ^{2} _{\infty}/
\sim,
\end{equation}
where $ (b, x, 1, \theta)_0 \sim (b, f _{b, \theta}(x), 1, \theta)
_{\infty}$,
using the modified polar coordinates $(r, 2 \pi
 \theta)$. The orientation on $D ^{2} _{\infty}$ is taken to be standard
 orientation as a domain in $ \mathbb{C}$ while the orientation on $D ^{2}_0$ is
 the opposite orientation. This is the same convention as \cite{virtmorse}
 (unfortunately not stated there) but opposite to \cite{GS}. The fiber $ X _{b}$
 over $ b$ is a Hamiltonian bundle $ \mathbb{CP} ^{n} \hookrightarrow X _{b}
\xrightarrow{\pi} \mathbb{CP} ^{1}$, with structure group the group of bundle
automorphisms of $ \mathbb{CP} ^{n} \times \mathbb{CP} ^{1}$ (over identity on
the base), fixing the fiber over $0 \in \mathbb{CP} ^{1}$.  
%
\begin{definition} \label{def.compatible} A family $ \{ J _{b}\}$ of almost
complex structures on fibers $X _{b}$, ($J _{b}$ is an almost complex
structure on $ X _{b}$) is called \emph{admissible} if:
\begin{itemize} 
  \item The natural map $\pi: (X_b, J_b) \to \mathbb{CP} ^{1}$ is
  $J_b$-holomorphic for each $b$.
  \item   $J _b$ preserves the vertical tangent bundle of $ \mathbb{CP} ^{n}
\hookrightarrow X_b \to \mathbb{CP} ^{1}$ and restricts to the standard
integrable $j$ on $ \mathbb{CP} ^{n}$. 
\end{itemize}
\end{definition}
The importance of this condition is that it forces bubbling to happen in the
fibers, where it is controlled by monotonicity of $( \mathbb{CP} ^{n}, \omega)$,
i.e. standard arguments in Gromov-Witten theory extend to this parametric case.
 
Since we are working with the identity component of $\Omega \text {Ham}(
\mathbb{CP} ^{n})$, the fibers $X_b$ are Hamiltonian bundle diffeomorphic to
$X= \mathbb{CP} ^{n} \times \mathbb{CP}^1$, although not naturally. The group
of $ \text {Ham}( \mathbb{CP} ^{n})$-bundle automorphisms of $X$ 
acts trivially on  homology, this follows by  \cite[Theorem
1.16]{symp.str.fiber}. 
In particular a section class $A$ in $H_2 (X _{b})$ is uniquely characterized by ``degree'' $d$, \begin{equation} A= d[line] +[\mathbb{CP}^1].
\end{equation} 

The classes we now define ``measure'' quantum self
intersection of a natural submanifold $I(B \times  \mathbb{CP} ^{n}) \subset P
_{f}$, which in terms of \eqref{eq.definitionPh} just corresponds to
inclusion into the first half of the union. The entire quantum self intersection
is captured by the total quantum class of $P _{f}$. Let $$\mathcal {M} (P _{f}, d, \{J _{b}\})$$ denote the moduli space 
  of tuples $ (u, b)$, $u$ is a $J _{b}$-holomorphic section of $X
_{b}$ in degree $d$. The virtual dimension of this space is given by the
Fredholm index: \begin{equation} \label {eq.dim} 2n + 2k + 2 \langle c_1
^{vert}, {A} \rangle = 2n + 2k + 2d \cdot (n+1) .
\end{equation}

We define $qc _{k} \in H ^{2k} (\Omega \text {Ham}( \mathbb{CP} ^{n}, \omega),
QH ( \mathbb{CP} ^{n}))$ as follows: \begin{equation} \label {eq.def.ch.class}
\langle qc_k, [f] \rangle = \sum_ {d \in \mathbb{Z}} b _{d}. \end{equation} 
Where $b _{d} \in H_* ( \mathbb{CP} ^{n})$ is defined by
  duality: \begin{equation*} b _{d} \cdot _{ \mathbb{CP} ^{n}} c = ev _{d}
  \cdot _{B \times \mathbb{CP} ^{n}} [B] \otimes c, 
 \end{equation*} and where  
\begin{align*} ev _{d}: \mathcal {M}  (P _{f}, d, \{J _{b}\}) \to B
\times \mathbb{CP} ^{n} \\ ev _{d} (b, u) = (b, u (0)),
\end{align*}
 and $\cdot _{M}, \cdot _{B \times M}$ denote the 
intersection pairings in $ M$, respectively $ B \times M$. 
The sum  \eqref{eq.def.ch.class}, is  finite and only $d<0$ contribute for
dimensional reasons.
%
%
%
The fact that $qc _{k}$ is well defined with respect to various choices: the
representative $[f]$, and the family $ \{J _{b}\}$, is described in more detail
in \cite{GS}, however this is a very standard argument in Gromov-Witten theory:
deformation of this data gives rise to a cobordism of the above moduli spaces. 
\begin{remark} While working with smooth cycles is formally
adequate, it is sometimes necessary to work with general representatives
for singular homology classes, coming from smooth chains, this does not present
significantly more analytical difficulty and most of this analysis appears in
\cite{Michael}.
\end{remark}

%
 We  now state the properties satisfied by these
 classes, these are
 verified in \cite{GS}.  
 Since our classes are already defined  on the universal
 level, we may proceed as follows:
 Let $\overline {qc}= \sum _{i=0} ^{\infty} qc _{i}$, be the formal sum (the
 total quantum class of $\Omega \text {Ham}( \mathbb{CP} ^{n})$), and $$W:
 \Omega \text {Ham}( \mathbb{CP} ^{n}) \times \Omega \text {Ham}( \mathbb{CP} ^{n}) \to \Omega \text {Ham}( \mathbb{CP} ^{n})$$ be 
the natural concatenation product, or equivalently (in the homotopy category)
the product induced by pointwise multiplication of loops, using topological
group structure of $ \text {Ham}(M, \omega)$. Then we have:
\begin{itemize} 
  \item 
\begin{equation} W ^{*} \overline {qc} =
\overline {qc} \otimes \overline {qc}.
\end{equation}
\item
$ \langle qc _{0}, [pt] \rangle=
\textbf{1} = [ \mathbb{CP} ^{n}] \in QH ( \mathbb{CP} ^{n})$.
\end{itemize}
\begin{notation} From now on we will be using shorthand $c[f]$ for the
evaluation of cohomology class $c$ on a homology class $[f]$.
\end{notation}
\begin{corollary} \label{corollary.homo} For the
Pontryagin product  \begin{equation} f_1 \star f_2: B_1 \times B_2 \to \Omega
\text {Ham}( \mathbb{CP} ^{n}), 
\end{equation}
of a pair of cycles $f_1, f_2: B ^{2k}_1, B ^{2l}_2 \to \Omega \text {Ham}(
\mathbb{CP} ^{n})$, 
\begin{equation} qc _{k+l} [f_1 \star f_2]  = qc _{k}
[f_1]  \ast qc_l[f_2] .
\end{equation} 
\end{corollary}
We now restrict our attention to the subgroup $i_n: \Omega SU (n) \subset \Omega
\text {Ham}( \mathbb{CP} ^{n})$. We set $$qc_k  ^{n}= i _{n} ^{*} qc _{k}.$$

We will proceed to induce these characteristic classes on the limit $\Omega SU$.
This will allow us to arrive at a true normalization axiom, completing the axioms in that setting,
and will also allow us to make contact with the splitting
principle on $BU$.  Let 
\begin{align*} i: \Omega SU (n) \to \Omega SU (m), \\
i (\gamma) (\theta) = \begin{pmatrix}  \gamma (\theta) & \\ & 1 \\
& & \ldots \\ 
& & & 1\\
 \end{pmatrix}
\end{align*}
for $m>n$, and
\begin{align*} j:  \mathbb{CP} ^{n-1} \to  \mathbb{CP} ^{m-1},\\
j ( [z_0, \ldots, z_n]) = [z_0, \ldots, z_n, 0, \ldots, 0]
\end{align*}
 be compatible inclusions.
Here is the main step, proved in Section \ref{section.proofs}.
 \begin{proposition}  \label{thm.induction} For a cycle $f: B ^{2k} \to \Omega
 SU (n)$  \begin{equation}  \label
{eq.induction} qc _{k} ^{m} [i \circ f]= j_* (qc _{k} ^{n} [f]) \in QH (
\mathbb{CP} ^{m-1}),
\end{equation}
 for $2k$ in stable range $ [2, 2n-2]$.  
\end{proposition}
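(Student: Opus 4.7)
The plan is to realize $P_f$ as a K\"ahler sub-bundle of $P_{i \circ f}$ and reduce the Gromov--Witten count for $P_{i \circ f}$ to one for $P_f$, showing that sections not contained in the sub-bundle contribute nothing to $\Psi^m([i \circ f])$ in the stable range.

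Since the inclusion $j\colon \mathbb{CP}^{n-1} \hookrightarrow \mathbb{CP}^{m-1}$ is $SU(n)$-equivariant (with $SU(n) \hookrightarrow SU(m)$ as the upper-left block) and K\"ahler, the clutching construction \eqref{eq.definitionPh} produces a fiberwise holomorphic inclusion of Hamiltonian fibrations $P_f \hookrightarrow P_{i \circ f}$ over $B \times S^2$. I would choose a $\pi$-compatible family $\{J_b^{(m)}\}$ on $P_{i \circ f}$ that preserves $P_f$, whose restriction $\{J_b^{(n)}\}$ is a regular $\pi$-compatible family for the moduli problem defining $\Psi^n([f])$, and which is generic in the normal directions; such $\{J_b^{(m)}\}$ exist because the ambient K\"ahler reference structure already preserves the sub-bundle and perturbations may be taken block-diagonal. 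Section classes correspond via $\tilde{A} \leftrightarrow j_*\tilde{A}$ (since $H_2(\mathbb{CP}^{n-1}) \to H_2(\mathbb{CP}^{m-1})$ is an isomorphism for $n \geq 2$), and functoriality of the coupling class \eqref{eq.calC} under the sub-bundle inclusion yields $\mathcal{C}_{i \circ f}(j_*\tilde{A}) = \mathcal{C}_f(\tilde{A})$.

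For each such $\tilde{A}$ the moduli space $\mathcal{M}(P_{i \circ f}, j_*\tilde{A}, \{J_b^{(m)}\})$ decomposes as $\mathcal{M}^{\mathrm{in}} \sqcup \mathcal{M}^{\mathrm{out}}$, with $\mathcal{M}^{\mathrm{in}}$ the closed sublocus of sections with image in $P_f$ and $\mathcal{M}^{\mathrm{out}}$ its open complement. By construction $\mathcal{M}^{\mathrm{in}}$ is canonically identified with $\mathcal{M}(P_f, \tilde{A}, \{J_b^{(n)}\})$ and satisfies $ev_0^{(m)}|_{\mathcal{M}^{\mathrm{in}}} = (\mathrm{id}_B \times j) \circ ev_0^{(n)}$. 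Consequently the $\mathcal{M}^{\mathrm{in}}$-contribution to the coefficient $b^{(m)}_{j_*\tilde{A}}$ in \eqref{eq.def.ch.class} equals $j_* b^{(n)}_{\tilde{A}}$, and summing against the matching coupling factors recovers $j_*\Psi^n([f])$.

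The remaining and principal step is to show $\mathcal{M}^{\mathrm{out}}$ contributes zero. The normal bundle $N$ of $P_f$ in $P_{i \circ f}$ is fiberwise isomorphic to $\mathcal{O}(1)^{\oplus(m-n)}$, so for any section $u$ the bundle $u^*N$ has nonnegative first Chern class and $H^1(S^2, u^*N) = 0$; normal deformations are unobstructed and transverse sections genuinely exist when $m > n$. However, $ev_0^{(m)}$ of $\mathcal{M}^{\mathrm{out}}$ generically misses $B \times j(\mathbb{CP}^{n-1})$, so the locus $\{u(0) \in j(\mathbb{CP}^{n-1})\}$ has real codimension $2(m-n)$ in $\mathcal{M}^{\mathrm{out}}$. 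In the stable range $2 \leq 2k \leq 2n-2$ a dimension count on the $[B]$-K\"unneth component shows the resulting pseudocycle on $B \times j(\mathbb{CP}^{n-1})$ carries homological degree exceeding $\dim \mathbb{CP}^{n-1} = 2(n-1)$ and therefore vanishes in $H_*(\mathbb{CP}^{n-1})$. The hard part will be executing this vanishing argument rigorously: one must achieve simultaneous regularity for $\{J_b^{(m)}\}$ on both strata, verify the exact codimension of the transverse sections hitting the sub-bundle at $0 \in S^2$, and confirm that the bound $2k \leq 2n - 2$ is precisely the condition needed for the resulting dimension inequality.
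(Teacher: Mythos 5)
Your overall strategy --- embedding $P_f$ holomorphically into $P_{i\circ f}$ and reducing the section count upstairs to one inside the sub-bundle --- is the same as the paper's second proof. But the execution contains a sign error that reverses the key mechanism. The normal bundle of $\mathbb{CP}^{n-1}$ in $\mathbb{CP}^{m-1}$ is indeed $\mathcal{O}(1)^{\oplus(m-n)}$ as a bundle over the fiber, but what matters is its pullback under a section $u$ in class $\widetilde{A}=[\sigma_{const}]-d\cdot[line]$: this has degree $\langle c_1(\mathcal{O}(1)), -d\,[line]\rangle=-d$, so for the class $S=[\sigma_{const}]-[line]$ one gets $u^*N\cong\mathcal{O}(-1)^{\oplus(m-n)}$, with \emph{negative} first Chern class. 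Hence (taking $m=n+1$, to which the general case reduces) the intersection number of any holomorphic section in class $S$ with $P_f$ is $-1$, and positivity of intersections forces every such section to lie entirely inside $P_f$: your $\mathcal{M}^{\mathrm{out}}$ is empty, and no evaluation-codimension argument is needed. Moreover $\mathcal{O}(-1)$ has vanishing $H^0$ and $H^1$, which is exactly what gives automatic transversality in the normal direction and lets one conclude that a regular family for $P_f$ extends to a regular family for $P_{i\circ f}$ with the same moduli space. As written, your premise that $u^*N$ has nonnegative degree and that transverse sections off $P_f$ "genuinely exist" is false; and the codimension argument built on it would not suffice anyway, since the coefficient $b^{(m)}_{j_*\widetilde{A}}$ is an element of $H_*(\mathbb{CP}^{m-1})$ determined by pairing $ev_0$ against $[B]\otimes c$ for \emph{all} $c$, so bounding only the part of the evaluation cycle that lands in $j(\mathbb{CP}^{n-1})$ does not kill a putative outside contribution.

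A second gap: you never invoke the stable range where it is actually needed. The hypothesis $2k\le 2n-2$ enters through the virtual dimension bound $2n+2k+2c_1^{vert}(\widetilde{A})\le 4n-2-2dn<0$ unless $d\le 1$, which reduces the entire computation to the single class $S$ with $d=1$ (the class with $d=0$ only feeds $qc_0$). Without this reduction your "for each such $\widetilde{A}$" would have to handle $d\ge 2$, where $u^*N\cong\mathcal{O}(-d)^{\oplus(m-n)}$ has nonvanishing $H^1$, automatic transversality fails, and the moduli spaces for $P_f$ and $P_{i\circ f}$ need not be identified. Repairing your argument amounts to inserting this dimension count and replacing the $\mathcal{M}^{\mathrm{out}}$ discussion by the positivity-of-intersections and Riemann--Roch surjectivity steps above, at which point it coincides with the paper's second proof.
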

   \begin{corollary} 
   There are induced cohomology classes $qc ^{\infty}_{k} \in H ^{*}(\Omega SU,
   \ring)$ by setting $qc ^{\infty}_0[pt] = \textbf{1}$, and $qc ^{\infty} _{k}
   [f] = qc ^{n} _{k} [f]$ for any $n> \deg[f]/2+1$. 
\end{corollary}
Let  $\overline {qc} ^{\infty}= \sum _{i=0} ^{\infty} qc ^{\infty}_{i}$, be
the formal sum (the total quantum class of $\Omega SU$), and $$W:
 \Omega SU \times \Omega SU\to \Omega SU$$ be the natural concatenation
 product.
\begin{theorem} \label{thm.axioms}
The classes $qc _{k} ^{\infty} \in H ^{2k} (\Omega SU, \ring)$ satisfy
and are determined by the following properties:
\begin{itemize} 
 
  \item \emph { \textbf{Whitney sum formula}}:\begin{equation} 
  \label{thm.whitney.sum} W ^{*} \overline {qc} ^{\infty} = \overline {qc}
  ^{\infty} \otimes \overline {qc} ^{\infty},
\end{equation}
where $\cup$ is the usual cup product of cohomology classes with coefficients in
the ring $\ring$.
\item \emph { \textbf{Normalization}}:
 If $f _{l}:
\mathbb{CP} ^{k}  \to BU \simeq \Omega SU $
 is the classifying map for the reduced $K$-theory class of the canonical
 bundle then:
 \begin{equation} \label{axiom.normalization}  f _{l} ^{*} \overline {qc}  =
 \textbf{1} + f_l ^{*} c_1 \cdot q,
\end{equation}
in other words, it is the total Chern class of the canonical line bundle $E \to
\mathbb{CP} ^{n}$, except for the multiplication of $c_1 (E)$ by $q=[pt] \in
\ring$.
\end{itemize}
\end {theorem}

\begin{remark} We note that these axioms are equivalent to the longer
but probably more familiar looking (and slightly simpler in practice) axioms
for stable characteristic classes: For a CW complex $B$, the classes $qc
^{\infty} _{k}$ are a sequence of functions from the reduced $K$-theory group
$\widetilde{K} (B)$ to $H ^{2k} (B, \ring)$ satisfying:
\begin{itemize}
  \item $qc ^{\infty}_{0} (E) = \textbf{1}$ for any $E \in \widetilde{K} (B)$. 
  \item  \label{functoriality/}  For $E_2 \in \widetilde{K} (B_2)$ and a map $g:
  B_1 \to B_2$: $$ g ^{*} qc ^{\infty} _{k} (E_2) = qc  ^{\infty}_{k} g ^{*} (E
  _{2}). $$
\item \label{thm.whitney.sum.first} If $E= E_1
\oplus E_2 \in \widetilde{K} (B)$, then
\begin {equation*} qc  ^{\infty}( E) = qc  ^{\infty}( E _{1}) \cup qc
^{\infty} (E _{2}), \end {equation*} where $ \cup$ is the cup product of cohomology classes with
coefficients in  $\ring$ and $qc ^{\infty} (E)$ is
the total characteristic class
\begin{equation*}
qc ^{\infty} (E) = qc ^{\infty}_0 (E)+\ldots+ qc ^{\infty}_m (E) \ldots \in H
^{*} (B, \ring),
\end{equation*}
\item 
 If $E _{l} \in \widetilde{K} ( \mathbb{CP} ^{n})$ is the class of the canonical
 line bundle  then 
 \begin{equation*} qc ^{\infty} (E _{l}) = \textbf{1} + c _{1} (E _{l}) \cdot q.
\end{equation*}

\end{itemize}
\end{remark}

The first property  in Theorem \ref{thm.axioms} follows immediately from 
the corresponding property 
for the classes $qc _{k} \in H ^{2k} (\Omega SU (n), QH ( \mathbb{CP} ^{n-1}))$. 
Verification of the second property is done in
Section \ref{section.proofs}. 

Consequently, we see that classes $qc ^{\infty} _{k}$ are stable characteristic
classes and formally have the same axioms as Chern classes. In particular the
splitting principle on $ BU$ allows us to write down quantum classes in terms of Chern classes, we do this below.
\begin{notation} From now on we drop the superscript $ \infty$ in $qc ^{\infty}
_{k}$, as we will deal exclusively with these stable classes, unless specified
otherwise.
\end{notation}
\begin{proof} [Proof of Theorem \ref{theorem.chern.classes}] 
By the
universal coefficient theorem $H ^{*} (BU \simeq \Omega SU, \ring)$ is a free polynomial algebra over 
$ \ring$ on generators $c_k$, (the Chern classes) since this is true over $ \mathbb{Z}$. 
The theorem will follow as soon as we  show that $qc _{k}$ are algebraically independent.
  Let $f: B ^{2k} \to BU(n) \subset BU$ be a cycle and $s: F \to B$ be the splitting map for the 
  associated vector bundle $E _{f}$, so that $[s \circ f] = [f_1] \star [f_2]
  \ldots \star [f_n] $, with $f _{i}$ classifying stabilized line bundles. Set $$x _{i} = f _{i}^
   {*} qc _{1} = f _{i} ^{*} c _{1} \cdot q,$$ then by the axioms we have that $(s \circ f) ^{*} qc _{k}$ is an elementary symmetric
polynomial  $sp _{k} [x_1, \ldots, x_n]$ of degree $k$ on generators $x _{i}$ . 
Clearly $sp _{k} [x_1, \ldots, x_n] = sp _{k} [f_1 ^{*} c _{1}, \ldots, f _{n}
^{*} c _{1}] \cdot q ^{k}$ but the latter is
 by the the axioms for Chern classes  exactly $(s \circ f) ^{*} c
_{k} \cdot q ^{k}$. Since $s ^{*}$ is injective it follows that $qc _{k} = c
_{k} \cdot q ^{k}$, in particular $qc _{k}$ are determined by the axioms,
moreover  since Chern classes are algebraically independent so are the classes $qc _{k}$. (Algebraic independence for both
 series of classes is directly deducible from algebraic independence of
elementary symmetric polynomials, in the ring of all symmetric polynomials, which is their
fundamental property.) 
\end{proof}
\section {Main arguments} \label{section.proofs}
We are going to give two proofs of Proposition \ref{thm.induction}. The first
one is more difficult, but less transcendental of the two, and we present it
here because it has the advantage of suggesting a route to computation of quantum
classes in the non-stable range and because the energy flow picture in this
argument is used for the proof of Theorem \ref{thm.Hofer}. Unfortunately, this
first proof is not very elementary and requires some basics of the theory of
loop groups. The second proof was suggested to me by Dusa McDuff, and basically
just uses the  automatic transversality result of Hofer-Lizan-Sikorav.  For
convenience, these will be presented independently of each other, so can be read in any order. \begin{proof} [Proof of Proposition \ref{thm.induction}] We will need to use the
 cellular decomposition of $ \Omega SU (n)$ induced by the energy
functional $E$, given by a bi-invariant metric on $ SU (n)$.
For various details in the following, the reader is referred to \cite[Sections
8.8, 8.9, 8.10]{Segal}. It is shown in \cite{Segal} that there is a holomorphic
cell decomposition of the Kahler manifold $\Omega SU (n)$ up to dimension $2n-2$,
with cells indexed by homomorphisms $\lambda: S ^{1} \to T$ with all weights
either $1, -1, 0$, (the weights can all be zero) where $T \subset SU(n)$ is a
fixed maximal torus. The closure $ \overline{C}_{\lambda}$ of each such cell $C_
\lambda$, is the closure of unstable manifold (for $E$) of the cycle of
subgroups in the conjugacy class of $ \lambda$ contained in the $E$ level set of
$\lambda$. Let $$f _{\lambda}: B ^{2k}_{\lambda} \to \Omega SU (n)$$ be the
compactified $2k$-pseudocycle representing $ \overline{C} _{\lambda}$. We
may assume now that $k>0$ so that some weights of $\lambda$ are non-zero, since
the case of $k=0$ is excluded in the hypothesis of our proposition.

It is shown  that each $\gamma \in \overline{C}_\lambda$ is
a polynomial loop, i.e. extends to a holomorphic map
$\gamma _{ \mathbb{C}}: \mathbb{C} ^{\times} \to SL _{ \mathbb{C}}{(n)}$.
Consequently, $X _{\gamma}$ has a natural holomorphic structure, so 
that the complex structure on each fiber $M _{z} \hookrightarrow X _{\gamma} \to
\mathbb{CP}^1$ is tamed by $\omega$, and so we have a natural admissible family of complex
structures on $P _{f _{\lambda}}$. By the classical Birkhoff-Grothendieck
theorem $X _{\gamma}$ is biholomorphic to $X _{\lambda_0}$ for some $S ^{1}$ subgroup
$\lambda_0$. Moreover by  \cite[Propostion 8.10.1]{Segal}
$\lambda_0$ is in fact $\gamma_{\infty} : S ^{1} \to SU (n)$, $\gamma_{\infty} \in
\overline{C} _{\lambda}$ is  the $S ^{1}$ subgroup, which is the limit in
forward time of the negative gradient flow trajectory of $\gamma$. In particular
$\gamma_{\infty}$ has all weights either $1, -1, 0$. 
For $\gamma = f _{\lambda} (b)$ we call the corresponding $\gamma_{\infty}$:
\begin{equation*} \label {eq.lambda'} \gamma^b_{\infty}.
\end{equation*}

Aside from the condition on weights of the circle subgroups, here is another
point where the stability condition $2k \leq 2n-2$ comes in. 
 The virtual dimension of $\mathcal {M} (P _{f _{\lambda}}, d,
\{J _{b}\})$ (see \eqref{eq.dim}) is 
\begin{equation*} 2n-2 + 2k + 2d \cdot n  \leq 2n-2 + 2n-2+2d \cdot n < 0
\text { unless $d \geq -1$}.
\end{equation*} 
 Thus only $d=-1$ classes can  contribute to $qc ^{k} [f _{\lambda}]$,
 ($d=0$ can only contribute to $qc _{0} [f _{\lambda}]$; which can be checked by
 analyzing the definition, and $d>0$ results in too high a virtual dimension). 

Let us first understand the spaces of holomorphic $d=-1$ sections in $X
_{\lag}$, with $\gamma_{\infty}$ an $S ^{1}$ subgroup of the type above.
\begin{lemma} \label{lemma.identified} $X _{\lag}$ is
biholomorphic to $S ^{3} \times _{\lag} \mathbb{CP} ^{n-1}$
 i.e. the space of equivalence classes of tuples 
\begin{equation*}  [z_1, z_2, x], \quad
x
\in
\mathbb{CP} ^{n-1},
\end{equation*}
under the action of $S ^{1}$
\begin{equation*} \label {eq.action} \theta\cdot [z_1, z_2, x] = [ e ^{
2 \pi i \theta } z_1, e ^{2 \pi i \theta} z_2, \gamma_{\infty} (\theta) x],
\end{equation*}
using complex coordinates $z_1, z_2$ on $S ^{3}$.
\end{lemma}
\begin{proof} To see this, write $ [z_1,
z_2; x]$ for the equivalence class of the point $$ (z_1/r, z_2/r; x) \in S ^{3}
\times \mathbb{CP} ^{n-1},$$ where $r$ is the norm of $(z_1, z_2)$. We identify
$ D_0 \times \mathbb{CP} ^{n-1}$ with $$ \{ [1,z;x]: |z| \leq 1, x \in
\mathbb{CP} ^{n-1}\}$$ via orientation reversing reflection (that is $(z,x)
\mapsto [1, \bar {z}, x]$) and $D_\infty \times \mathbb{CP} ^{n-1}$ with $$
\{[z, 1; x]: |z| \leq 1, x \in \mathbb{CP} ^{n-1}\}$$ naturally. The gluing map is then \begin{equation*} [1, e
^{-2 \pi i \theta}; x] \sim [e ^{2 \pi i \theta}, 1; \gamma _{\infty}
(\theta)x], \end{equation*} which is consistent with the definition of $X _{\gamma _{\infty}}$, (by using $ \gamma _{\infty}$ as
clutching map). 
\end{proof} 

Let $H_{\gamma_{\infty}}$ be the normalized generating function for the action
of $\gamma_{\infty}$ on $ \mathbb{CP} ^{n-1}$. Each $x$ in the max level set  $F
_{\max}$ of $H_{\gamma_{\infty}}$, gives rise to a section $\sigma _{x}= S ^{3} \times
_{\lag} \{x\}$ of $X _{\gamma_{\infty}}$. It can be checked that $ [\sigma
_{x}]$ has degree $d=-1$. To see this use the expression $N\sigma _{x}= S ^{3}
\times _{\lag} T _{x} \mathbb{CP} ^{n-1}$ to evaluate $c_1$ of the normal
bundle of this section; this Chern number will be $-n$ so long as  action of $
\gamma _{\infty}$ has all weights $1,-1$ or $ 0$ as assumed. (With our somewhat
unusual orientation conventions.)

 Moreover, by elementary energy considerations it can
be shown that these are the only  holomorphic class $d=-1$ sections (or section
class stable maps) in $X _{\gamma _{\infty}}$, see for example the discussion
following Definition 3.3. in \cite{virtmorse}. Consequently the moduli space $\mathcal {M}
= \mathcal {M} (P _{f_\lambda}, d=-1, \{J _{b}\})$ is compact. It's restriction
over open negative gradient trajectories $ \mathbb{R} \to \bar {C} _{\lambda}$ asymptotic to
$\gamma_{\infty}$ in forward time is identified with $ \mathbb{R} \times F _{\max}$,
where $F _{\max}$ is as above. 

 The regularized moduli space can be constructed from $ \mathcal {M}$ together
 with kernel, cokernel data for the  Cauchy Riemann operator for
 sections $u \in \mathcal {M}$. This is somewhat well understood by now, see
 for example an algebro-geometric approach  in \cite{Virt.Mod.cycle}. It is however
 interesting that our moduli space is highly singular, so a direct computation
 of the invariant from this data is not elementary.

Consider the pushforward:
$i\circ f_{\lambda}: B _{\lambda} \to \Omega SU (m)$.
What we need to show is that  $ \mathcal {M}$ is identified with $\mathcal {M}
^{s}= \mathcal {M} (P _{i \circ f_\lambda}, {-1}, \{J _{b}\})$ and the
kernel/cokernel data of the family of CR operators associated with $ \mathcal {M}$ are identified with
the kernel/cokernel data for the family of CR operators associated to $\mathcal
{M} ^{s}= \mathcal {M} (P _{i \circ f_\lambda}, {-1}, \{J _{b}\})$.

The fact that the moduli spaces $ \mathcal {M}$, $ \mathcal {M} ^{s}$ are identified follows more or less immediately from the preceding discussion. The inclusion map 
$i:\Omega SU (n) \to \Omega SU (m)$ takes gradient trajectories in $\Omega SU
(n)$ to gradient trajectories in $\Omega SU (m)$  and any circle subgroup
is taken to a circle subgroup  with $m-n$ new 0-weights. 
There is a natural map $j: P _{f} \to P _{i \circ f _{\lambda}}$, and its clear
that it identifies $\mathcal {M}$ with $ \mathcal {M} ^{s}$.
We will denote $j(X_b)$ by $ X _{j(b)} ^s$,  and an element in $ \mathcal {M}
^{s}$ identified to an element $u = (\sigma _{x}, b) \in \mathcal {M}$ by $j(u)
$, where $b \in B _{f_\lambda}$.

We show that the CR operators at $u$, $j(u)$ have the same kernel and
cokernel. Let $V$ denote the infinite dimensional domain for the 
 CR operator $D _{u}$ at $u$. There is a subtlety here, since our
target space $P _{f _{\lambda}}$ is a smooth stratified space we actually have to
work a stratum at a time, but we suppress this. The domain for the 
CR operator $D _{j(u)}$ is the appropriate Sobolev completion of the space
of $C ^{\infty}$ sections of the  bundle $j(u) ^{*} T ^{vert}X _{j (b)} ^{s}$,
where
\begin{equation*} j (u) ^{*} T ^{vert} X _{j (b)}^{s} \simeq S^{3} \times_
{\gamma_{\infty} ^{ j(b)}} T _{j(x)} \mathbb{CP} ^{m-1},
\end{equation*}
is a holomorphic vector bundle, and $\simeq$ is isomorphism of holomorphic
vector bundles. 
Similarly, 
\begin{equation*} u ^{*} T ^{vert}X _{b} \simeq S^{3} \times _{\gamma
^{b}_{\infty}} T _{x}\mathbb{CP} ^{n-1}.
\end{equation*}


Consequently  $j (u) ^{*} T ^{vert}X _{j (b)} ^{s}$ holomorphically splits into
line bundles with Chern numbers either $0, -2, -1$.  
 All the Chern number
 0 and $-2$ summands are identified with corresponding summands of $u ^{*}
 T ^{vert}X_{b}$, and consequently the domain of $D _{j(u) }$ in comparison to
 $D _{u}$ is enlarged by the space of sections $W$, of sum of Chern number $-1$
holomorphic line bundles. But in our setting since $J _{b}$ are all integrable,
$D _{j(u)}$ can be identified with Dolbeault operator and so we will have no
``new kernel or cokernel'' in comparison to $D _{u}$. More precisely:  $$D _{j(u)}: W \to \Omega ^{0,1} (S
^{2}, j(u) ^{*} T ^{vert}X _{j (b)} ^{s})/ {\Omega ^{0,1} (\mathbb{CP}^1, u ^* {T
^{vert}X})}$$ is an isomorphism, which concludes our argument.
\end {proof}
\begin{proof} [Second proof of Proposition \ref{thm.induction}] It is enough to
verify the stabilization property for $m=n+1$. Let $f: B ^{2k} \to
\Omega SU (n)$ be a map of a smooth, closed oriented manifold.
By our assumptions the virtual dimension of  $ \mathcal {M} (P _{f}, d,
\{J _{b}\}) $ is
\begin{equation*}  2n-2+ 2k + 2d \cdot n \leq 2n-2 + 2n-2 + 2d \cdot n <
0 \text { unless $d \geq -1$},
\end{equation*} 
Thus, only $d=-1$ degree holomorphic sections can contribute to
$qc_k [{f}]$, as $d=0$ only contributes to $qc_0$ and $d >0$ results in too high 
virtual dimension.

Let $j: \Omega SU (n) \to \Omega SU (n+1)$ be the inclusion map, and denote 
$j \circ f$ by $f'$.  
Take a family of almost complex structures $ \{J _{f,b} \}$ on $P _{f}$ for
which the moduli space $ \mathcal
 {M} (P _{f}, -1, \{J _{f,b} \})$ is regular. Extend $ \{J ^{} _{f, b}\}$ to a
 family $ \{J ^{} _{f',b}\}$ on $P _{f'}$ in any way. 
Consequently, for the families of almost complex structure $ \{J
_{f,b}\}$ on $P _{f}$ and $ \{J _{f', b}\}$ on $P _{f'}$ the natural embedding
of $P _{f}$ into $P _{f'}$ is holomorphic. The intersection number of a curve
$u \in \mathcal {M} (P _{f'}, -1, \{J _{f',b}\})$ with $P _{f} \subset P
_{f'}$, is the intersection number of $-[line]$ with $ \mathbb{CP} ^{n-1} \subset
\mathbb{CP} ^{n}$, i.e. it is -1. Consequently, by a version of positivity of
intersections, which follows by a Theorem of Micallef-White 
\cite{Micallef.white} (this is also exercise 2.6.1 in \cite{MS}) for the family $
\{J _{b}\}$ all the elements of the space $ \mathcal {M} (P _{f'}, -1, \{J _{b}\})$
  are contained inside the image of embedding of $P _{f}$ into $P _{f'}$. We now show that $ \{J _{f',b}\}$ is also regular. This will immediately
 yield our proposition. The pullback of the normal bundle to
 embedding of $P _{f}$, by $u _{b} \in \mathcal {M} (P _{f'}, -1, \{J
 _{f',b}\})$ is the degree $ -1$ line bundle $ \mathcal {O} (-1)$. So we have an
 exact sequence \begin{equation*}  u _{b} ^{*} T ^{vert} P _{f} \to (j \circ u
 _{b}) ^{*} T ^{vert} P _{f'}  \to \mathcal {O} (-1).
\end{equation*}
 By construction of $ \{J
 ^{pert} _{f', b}\}$ the real linear CR operator $D _{i \circ u}$ is compatible
 with this exact sequence. More explicitly, we have the real linear CR
 operator
\begin{equation*} \Omega ^{0} ( \mathbb{CP}^1, \mathcal {O} ({-1})) \to \Omega
^{0,1} ( \mathbb{CP} ^{1}, \mathcal {O} (-1))
\end{equation*}
induced by $$D _{j \circ u _{b}}: \Omega ^{0} (\mathbb{CP}^1, (j \circ u) ^{*} T
 ^{vert} P _{f'}/ u ^{*} T ^{vert} P _{f}) \to \Omega ^{0,1} (\mathbb{CP}^1,  (j \circ
 u) ^{*} T ^{vert} P _{f'}/ u ^{*} T ^{vert} P _{f}),$$
  since $u ^{*} T ^{vert} P _{f}
 \subset  (j \circ u) ^{*} T
 ^{vert} P _{f'}$ is $J _{f',b}$ invariant.  Such an operator is surjective by
 the automatic transversality theorem of Hofer-Lizan-Sikorav
 \cite[Theorem 1]{HLS}.
\end{proof}
\begin{proof} [(Verification of part 2 of Theorem \ref{thm.axioms})] 
Consider the space $Z$ of $ S ^{1}$-subgroups of $SU (k+2)$, conjugate to the
diagonal $S ^{1}$-subgroup $ \lambda$ with diagonal entries $ [e ^{2\pi i
\theta}, e ^{- 2\pi i\theta}, 1,\ldots, 1]$, and such that the weight 1 space
(the subspace on which $ S ^{1}$ acts by $ e ^{2\pi i \theta}$)  is fixed for
all elements of $Z$. Then $z \in Z$ is completely determined by the choice of -1 weight space,
i.e. by choice of a  line in $ \mathbb{C} ^{k+1}$. So $Z$ is identified with $
\mathbb{CP} ^{k}$. So we have an inclusion $$\lambda: \mathbb{CP} ^{k} \to
\Omega SU (k+2).$$ 
By  \cite[Proposition 8.8.1] {Segal}, there is a natural map (the
Bott map) $$i _{B}
^{k+2}: \Omega SU (k+2) \to BU,$$ which induces an isomorphism on homotopy
groups up to dimension $2k+2$, and consequently on rational homology groups as
well by Theorem \ref{thm.milnor.moore}. 
\begin{lemma} $$i _{B}
^{k+2}: \Omega SU (k+2) \to BU (k+2),$$ restricted to $Z$ is exactly the
natural map $f_l: \mathbb{CP} ^{k} \to BU (k+2)$, classifying the reduced $K$-theory class of the canonical line bundle $E _{l}$. 
\end{lemma}
\begin{proof} 
Let us temporarily shift degree here down by two, so we will be proving the
lemma for $i _{B} ^{k}$. We'll be essentially following \cite{cohen}, except that
we work with negative winding number loops, instead of positive winding number.
There is a filtration of $\Omega SU (k)$ by complex compact subvarieties \begin{equation} \label {eq.filtration} F
_{-1,k} \hookrightarrow \ldots \hookrightarrow F _{w,k} \hookrightarrow F
_{w-1,k} \hookrightarrow \ldots \hookrightarrow F _{-\infty, k} \hookrightarrow 
\Omega _{pol} SU (k) \simeq \Omega SU (k),
\end{equation}
where $\Omega _{pol} SU (k)$ is the subgroup of $\Omega SU (k)$ consisting of
loops whose entries in the matrix representation are finite Laurent polynomials
in $z$, ($z (\theta)= e ^{2\pi i \theta}$) that is: \begin{equation} \gamma
(\theta) = \sum _{j = -N} ^{N} A _{j} z ^{j} (\theta),
\end{equation}
for $A _{i}$ $k \times k$ matrix, (this is shown in \cite{Segal} to be homotopy
equivalent to the smooth loop space $\Omega SU (k)$).  We have a subvariety $ \widetilde{F}
_{w,k} \subset \Omega U (k)$ defined as the subgroup consisting of polynomial
loops with $A _{i}= 0$ for $i  >0$, and winding number $w$ (degree of
composition with the determinant map). We then define $F _{w,k} = \lambda_1 ^{-w} \widetilde{F} _{w,k} \subset \Omega SU (k)$, where $ \lambda_1$ is the winding number one $S
^{1}$-subgroup with diagonal entries $ [e ^{2 \pi i \theta}, 1, \ldots, 1]$. Now
Proposition 1.3 in \cite{cohen} following Mitchell \cite{mitchell} states that 
 $\lim _{k \mapsto \infty} F
_{w, k} \simeq BU (-w)$ and that the following diagram homotopy commutes:
\begin{equation*}   
\xymatrix{F _{w,k} \ar [d]  \ar[r] ^{k \mapsto \infty} & BU (-w) \ar[d]\\  
\Omega SU (k) \ar [r] ^{i _{B } ^{k}} & BU}.
\end{equation*} 
(There appear to be some minor typographical errors in \cite{cohen} to the 
effect that $\Omega U (n)$ should be replaced by $\Omega SU (n)$ in a few
instances.) Also $ \widetilde{F} _{-1,k}$ is readily seen to be the space of $S
 ^{1}$-subgroups conjugate to weight $-1, 0, \ldots, 0$ subgroup, this
space is just $ \mathbb{CP} ^{k-1}$, since we are just choosing a weight -1 line
in $ \mathbb{C} ^{k}$. Consider the inclusion $i: \widetilde{F} _{-1,k-1} \to
\widetilde{F} _{-1,k}$ induced by the inclusion  $SU (k-1) \hookrightarrow SU (k)$, 
\begin{equation*} 
A \mapsto \left(\begin{array}{ccc} 1 & \\
& A
\end{array} \right).
\end{equation*}
This gives us the inclusion:
 \begin{equation*} \mathbb{CP} ^{k-2} \simeq \widetilde{F}
_{-1, k-1} \xrightarrow{i} \widetilde{F} _{-1,k} \to \Omega SU (k).
\end{equation*}
The image of this inclusion is our subspace $Z ^{k-2} \subset
\Omega SU (k)$. On the other hand by
the above stated Mitchell's theorem, and since we are in the stable range ($deg
Z \leq 2k-2$, see explanation of $i _{B} ^{k}$), passing to the limit in $k$
this inclusion is identified in the homotopy category with the natural map $ \mathbb{CP} ^{k-2} \hookrightarrow BU (1) \hookrightarrow BU$, classifying the reduced K-theory class of the tautological
line bundle.  
\end{proof}

The fiber of the associated bundle $P _{f_l} \to \mathbb{CP} ^{k}$ over $z \in
\mathbb{CP} ^{k}$ is $X _{\lambda (z)}$, 
which by Lemma \ref{lemma.identified} can be identified with $S^3 \times _{S^1}
\mathbb{CP} ^{k+1} $, where $S^1$ acts diagonally on $S ^{3} \times \mathbb{CP} ^{k+1}$ by $$ \theta \cdot
(z_1, z_2; x) = (e ^{2 \pi i \theta} z_1, e ^{2 \pi i \theta} z_2; \lambda_z
(\theta)x),$$ using complex coordinates on $S^3$. 
So each fiber has a natural holomorphic
structure $J_z$, which varies smoothly in the family, and in fact all fibers are
biholomorphic. 

Let $ \sigma ^{z}_{\max}$ denote the holomorphic section of $X _{\lambda (z)}$
corresponding to the maximal fixed point $ \max$ of the Hamiltonian $S
^{1}$-action of $ \lambda (z)$ on $ \mathbb{CP} ^{k+1}$ as follows:
\begin{equation} \sigma ^{z} _{\max} = S ^{3} \times _{\lambda (z)} \{\max\}
\subset X _{\lambda (z)}.
\end{equation}
For example: for the diagonal subgroup $\lambda (\theta) = [e ^{2 \pi i \theta},
e ^{-2 \pi i \theta}, 1, \ldots, 1]$ acting on $ \mathbb{CP} ^{k+1}$, 
$\max= [1,0,\ldots,0]$. As we previously mentioned these are the only degree
$d=-1$ holomorphic sections (or section class stable maps) of $X _{\lambda
(z)}$. In particular the moduli space  $ \mathcal {M}=\mathcal {M} (P _{f
_{l}}, d=-1, \{J _{z}\})$ is compact and is identified with the base $
\mathbb{CP} ^{k}$.

The normal bundle  $N _{z} = (\sigma ^{z} _{\max}) ^{*} T ^{vert} X _{\lambda
(z)}$ is identified as
\begin{equation*} S ^{3} \times _{\lambda (z)} T _{\max} \mathbb{CP} ^{k+1}.
\end{equation*}
The action of $ \lambda (z)$ on $ T _{\max} \mathbb{CP} ^{k+1}$ has
weights $-2,-1,\ldots, -1$. Denote the weight -2 subspace by $ \mathcal {O} _{z}
(-2)$. The cokernel of the corresponding  CR operator,  $$D_{x,z}: \Omega ^{0}
(\mathbb{CP}^1, N _{z}) \to \Omega ^{0,1} (\mathbb{CP}^1, N_z)$$ is identified with the cokernel of the Dolbeault
operator $$H^{0,1}_{\bar \partial} ( \mathbb{CP}^1, \mathcal {O}_z (-2)) \simeq (H ^{1,0}_
{ \bar \partial} (\mathbb{CP}^1, \mathcal {O} _{z} (-2) ^{*} )) ^{*}.$$ By a special case of Kodaira-Serre duality, the latter can be identified with $$L_z=(H ^{0} (\mathbb{CP}^1,
\mathcal {O} _{z} (-2)^{*} \otimes K _{z})) ^{*},$$ where $K _{z}=T^* (\sigma
^{z} _{\max})$ denotes the canonical bundle of $\sigma ^{z} _{\max}$. 
The vector space $L _{z}$ is canonically identified with $ \mathcal {E} _{z}
\otimes \mathcal {K} _{z}$, where $ \mathcal {E} _{z}$ is the restriction
$\mathcal {O} _{z} (-2) |_0$ and likewise $ \mathcal {K} _{z} =  K
_{z} ^{*}|_0$. This is because $ (\mathcal {O} _{z} (-2)^{*} \otimes K
_{z})) ^{*}$ has Chern number 0 and so is holomorphically trivial.  

This gives us an obstruction line bundle $O= \mathcal {E} \otimes \mathcal {K}$
over $ \mathcal {M} \simeq \mathbb{CP} ^{k}$. But $ \mathcal {K}$ is trivial,
since each $ \mathcal {K}_z$ is identified with $T  _0 \mathbb{CP}^1$ under natural projection $\pi: X _{\lambda
(z)} \to \mathbb{CP}^1$. On the other hand,  $  \mathcal {E} _{z}$ is
naturally identified with weight $ -1$ subspace of the action of $ \lambda (z)$
on $ \mathbb{C} ^{k+2}$, i.e. with the fiber of $E _{l}$ over $z$, where $E
_{l}$ was the line bundle whose reduced $K$-theory class was classified by
$f_l: \mathbb{CP} ^{k} \to BU$. Consequently the regularized moduli space is
given by intersection of a generic section of $O$ or $E _{l}$ with the $
0$-section, i.e. its homology class is the homology Euler class of $ E _{l}$. 
It follows that $f _{l} ^{*}qc_i  [ \mathbb{CP} ^{i}]  = 0$ unless
$i=1$ and $ f _{l} ^{*}qc_1 [ \mathbb{CP} ^{1}]  =  c_1 (E
_{l}) [ \mathbb{CP} ^{1} ]  \cdot q=  -q \in QH (
\mathbb{CP} ^{k+1})$.
\end{proof}

\begin{proof} [Proof of Theorem \ref{thm.Hofer}]
Let $0 \neq a \in H _{2k} (\Omega SU (n))$, with $0
 \leq 2k \leq 2n-2$.  By Corollary
\ref{theorem.corollary} we have that
 \begin{equation} \label {eq.nonvanish} \langle \prod _{i} qc ^{\alpha_i}
 _{\beta_i}, a\rangle \neq 0, 
\end {equation}
for some $\alpha _{i}$, $\beta _{i}$.
Since these classes are pull-backs of the classes $$qc _{k} \in H _{2k} (\Omega
\text {Ham}( \mathbb{CP} ^{n-1}, QH ( \mathbb{CP} ^{n-1})),$$ the same
holds for these latter cohomology classes and the cycle $i_*a$. 
\begin{lemma} For any representative $f: B \to \Omega \text {Ham}( \mathbb{CP}
^{n-1}, \omega)$ for $i_*a$,
 $$L ^{+} (f (b)) \geq 1$$ for some $b \in B$.
\end{lemma}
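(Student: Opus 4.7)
The plan is to argue by contradiction: suppose $L^+(f(b)) < 1$ for every $b \in B$, and derive that every positive-degree quantum class $qc_k(P_f)$ vanishes, contradicting the non-vanishing pairing \eqref{eq.nonvanish}. From the continuity of $L^+$ and the compactness of $B$, I first select $\epsilon > 0$ with $L^+(f(b)) \leq 1 - \epsilon$ uniformly in $b$.

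Next I would pick a smoothly varying family of normalized generating Hamiltonians $H^b_t$ for the loops $\gamma^b := f(b)$ with $\int_0^1 \max_M H^b_t\, dt \leq 1 - \epsilon$, and assemble them into a family of closed two-forms $\Omega_b$ on the fibres $X_b = P_f|_b$, each fibrewise representing the coupling class $\mathcal{C}$. I would then choose a family $\{J_b\}$ of $\pi$-compatible almost complex structures tamed by $\Omega_b$, via the standard Thurston-type construction (cf.\ \cite{MS}). By the standard Hofer-geometric inequality, this can be set up so that for the section class $\widetilde{A} = [\sigma_{\mathrm{const}}] - [\mathrm{line}]$ one has
\[
\Omega_b(\widetilde{A}) \;=\; \mathcal{C}(\widetilde{A}) \;\leq\; L^+(\gamma^b) - 1 \;\leq\; -\epsilon \;<\; 0.
\]
Combined with the $\Omega_b$-area positivity $\int u^* \Omega_b \geq 0$ for $J_b$-holomorphic sections, this rules out any $J_b$-holomorphic section in class $\widetilde{A}$, i.e.\ $\mathcal{M}(P_f, \widetilde{A}, \{J_b\}) = \emptyset$.

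By the dimension argument already used in the first proof of Proposition \ref{thm.induction}, $\widetilde{A}$ is the only section class that can contribute to $qc_k(P_f)$ for any $k \geq 1$, so $qc_k(P_f) = 0$ for every $k \geq 1$. Since $\deg a = 2k \geq 2$, the non-vanishing pairing \eqref{eq.nonvanish} must include a factor $qc_{\beta_i}$ with $\beta_i \geq 1$, forcing the whole product to vanish on $i_*a$, which is the desired contradiction.

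The main obstacle I expect is the quantitative Hofer-coupling estimate in the second paragraph: producing $(\Omega_b, J_b)$ with $\pi$-compatibility, $\Omega_b$-taming and the strict inequality $\mathcal{C}(\widetilde{A}) < 0$ simultaneously. Taming a $\pi$-compatible $J_b$ by a closed extension of the vertical $\omega$ typically requires adding a positive multiple of $\pi^* \omega_{S^2}$, which also increases the value on $\widetilde{A}$ by the base area; the slack $\epsilon$ in the uniform Hofer bound is precisely what is needed to absorb this correction while preserving negativity. The parametric assembly of this otherwise standard Seidel--McDuff coupling-form estimate over $B$ is routine via partition of unity.
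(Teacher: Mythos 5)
Your argument is correct and is essentially the contrapositive of the paper's own proof: the paper builds the same taming family $\Omega_b = \widetilde{\Omega}_b + \max_{x} H^b_\theta\, d\eta\wedge d\theta$, observes that non-vanishing of the pairing \eqref{eq.nonvanish} forces a holomorphic $S$-section to exist for some $b$, and reads off $1 = -\langle[\widetilde{\Omega}_b],S\rangle \le L^+(f(b))$ from taming positivity. The only slip is the displayed equality $\Omega_b(\widetilde{A}) = \mathcal{C}(\widetilde{A})$ --- a form taming a $\pi$-compatible $J_b$ cannot take the value $\mathcal{C}(\widetilde{A})=-1$ on $\widetilde{A}$; the correct statement, which your final paragraph in fact supplies, is $\Omega_b(\widetilde{A}) = \mathcal{C}(\widetilde{A}) + \int_{S^2}\max_{x} H^b_\theta\, d\eta\wedge d\theta = L^+(f(b)) - 1$.
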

\begin{proof} 
This is essentially \cite[Lemma
3.2]{virtmorse}, and we reproduce it's proof here for convenience. 
The total space of $P _{f}$ is 
\begin{equation}  P _{f}= B\times \mathbb{CP} ^{n-1}
\times D ^{2} _{0} \bigcup B \times \mathbb{CP} ^{n-1} \times   D ^{2}
_{\infty}/ \sim, 
\end{equation}
where $ (b, x, 1, \theta)_0 \sim (b, f _{b, \theta}(x), 1, \theta)
_{\infty}$,
using the polar coordinates $(r, 2 \pi
 \theta)$.
The fiberwise family of Hamiltonian connections $ \{ \mathcal {A} _{b} \}$  are  
induced by a family of certain closed forms $ \{ \widetilde{\Omega}  _{b}\}$, which
we now describe, by declaring horizontal subspaces of $ \mathcal {A} _{b}$ to be
$ \widetilde{\Omega} _{b}$-orthogonal to the vertical subspaces of $\pi: F _{b}
\to \mathbb{CP}^1$, where $F _{b}$ is the fiber of $P _{f}$ over $b \in S ^{k}$.
 
The construction of this family mirrors the construction in
 Section 2.2 of \cite{GS}.  First we define a
family of forms $\{{ \widetilde{\Omega}} _{
b} \}$ on $B\times \mathbb{CP} ^{n-1} \times   D ^{2} _{\infty}$.
\begin{equation} \label {eq.coupling.family} { \widetilde{\Omega}}_{b} |
_{D ^{2} _{\infty}} (x,r, \theta) = \omega - d ( \eta (r) H^b_ \theta (x))
\wedge d\theta \end{equation} Here, $H ^{b}_\theta$ is the generating
Hamiltonian for $f(b)$, normalized so that $$\int _{ \mathbb{CP} ^{n-1}}
H ^{b}_\theta \omega ^{n-1}=0, $$ for all $\theta$ and the function $\eta:
[0,1] \to [0,1]$ is a smooth function satisfying $$0 \leq \eta' (r),$$
and $$ \eta (r) = \begin{cases} 1 & \text{if } 1 -\delta \leq r \leq 1 ,\\
r ^{2}  & \text{if } r \leq 1-2\delta,
\end{cases}$$  for a small $\delta >0$.

It is not hard  to check that the gluing relation $\sim$ pulls
back the form $ \widetilde{\Omega}_{b}| _{D ^{2} _{\infty}}$ to the form 
$\omega$ on the boundary $ \mathbb{CP} ^{n-1} \times \partial D^2_0$,
which we may then extend to $\omega$ on
the whole of $ \mathbb{CP} ^{n-1}
\times
D^2 _{0}$. Let $\{ \widetilde{\Omega} _{b}\}$ denote the resulting family
on $X _{b}$. The forms $ \widetilde{\Omega}  _{b}$ on $X_{b}$ restrict
to $\omega$ on the fibers $ \mathbb{CP} ^{n-1} \hookrightarrow X _{b} \to
\mathbb{CP}^1$ and the 2-form obtained by fiber-integration $\int _{ \mathbb{CP} ^{n-1}}
(\widetilde{\Omega} _{b}) ^{n}$ vanishes on $\mathbb{CP}^1$. Such forms are called \emph{coupling forms}, which is a
notion due to Guillemin, Lerman and Sternberg \cite{GuileminSternberg}. We then
have a   symplectic form
\begin{equation*} \Omega _{b} = \widetilde{\Omega} _{b} + \max _{x \in
\mathbb{CP} ^{n-1}} H ^{b} _{\theta} \, d \eta \wedge d\theta + \epsilon \cdot
\omega _{st},
\end{equation*}
defined on $ \mathbb{CP} ^{n-1} \times D ^{2} _{\infty}$ trivially extending to
$X _{b}$. Let $J ^{ \mathcal {A}} _{b}$ denote the almost complex structure on
$X _{b}$ induced by $ \mathcal {A}$ by declaring horizontal subspaces to be
invariant, with holomorphic projection map to $ \mathbb{CP} ^{1}$, and
restricting to standard $j$ on the fibers $ \mathbb{CP} ^{n-1} \hookrightarrow
X _{b} \to \mathbb{CP} ^{1}$.

Since $J ^{ \mathcal {A}} _{b}$ is by construction compatible with
$\Omega _{b}$, $J ^{ \mathcal {A}} _{b}$-holomorphic section $u$ of $F _{b}$ in
degree $d <0$ gives rise to a lower bound \begin{equation*} -d= - \langle
[\widetilde{\Omega} _{b}], [u] \rangle \leq \int _{\mathbb{CP}^1} \max _{x \in \mathbb{CP} ^{n-1}} H ^{b} _{\theta} \, d \eta \wedge d \theta + \epsilon= L
^{+}( f (b)) +\epsilon.
\end{equation*}
Of course such a holomorphic section exists  by  
\eqref{eq.nonvanish}. 
Now make $ \epsilon$ tend to $ 0$. 
\end{proof}

 The theorem follows once we note that there is a representative $f': B \to
 \Omega \text {Ham}( \mathbb{CP} ^{n-1}, \omega)$ for $i_*a$, in the form
 $i \circ f$, with $f: B \to \Omega SU (n)$, s.t. the image of $f'=i \circ f$
 is contained in the sublevel set $\Omega ^{1} \text {Ham}( \mathbb{CP} ^{n-1}
 \omega)$ for $L ^{+}$, (loops with $L ^{+}$ length at most $1$). Such a
 representative is  found from the energy flow cellular structure of
 $\Omega SU (n)$, because this cellular structure is perfect (all cells
 are of even dimension) $[f]$ must be some combination $ \sum _{i} a_i [f
 _{\lambda_i}]$, and all $f _{\lambda _{i}}$ lie in $ \Omega ^{1} SU (n)$,
 see the proof of Proposition \ref{thm.induction}.
\end {proof}
\bibliographystyle{siam}  
\bibliography{link} 

\begin{thebibliography}{10}

\bibitem{CS}
{\sc H.~Cartan and J.-P. Serre}, {\em Espaces fibr\'es et groupes d'homotopie
  ii: {A}pplications}, C. R. Acad. Sci. Paris, 234 (1952), pp.~393--395.

\bibitem{cohen}
{\sc R.~L. Cohen, E.~Lupercio, and G.~B. Segal}, {\em {Holomorphic spheres in
  loop groups and Bott periodicity.}}
\newblock {Yau, S.T. (ed.), Surveys in differential geometry. Papers dedicated
  to Atiyah, Bott, Hirzebruch and Singer . Somerville, MA: International Press.
  Surv. Differ. Geom., Suppl. J. Differ. Geom. 7, 83-106 (2000).}, 2000.

\bibitem{Entov}
{\sc M.~Entov}, {\em {K-area, Hofer metric and geometry of conjugacy classes in
  Lie groups.}}, Invent. Math., 146 (2001), pp.~93--141.

\bibitem{Gromov2}
{\sc M.~Gromov}, {\em {Pseudo holomorphic curves in symplectic manifolds.}},
  Invent. Math., 82 (1985), pp.~307--347.

\bibitem{GuileminSternberg}
{\sc V.~Guillemin, E.~Lerman, and S.~Sternberg}, {\em Symplectic fibrations and
  multiplicity diagrams}, Cambridge University Press, Cambridge, 1996.

\bibitem{HLS}
{\sc H.~Hofer, V.~Lizan, and J.~C. Sikorav}, {\em {On genericity of holomorphic
  curves in 4-dimensional almost complex manifolds}}, {Journal of Geom. Anal.},
  7 (1997), pp.~149--59.

\bibitem{fibre.bundles}
{\sc D.~H. Husemoller}, {\em {Fibre bundles. 3rd ed.}}, {Graduate Texts in
  Mathematics. 20. Berlin: Springer-Verlag. xix, 353 p. DM 98.00; \"oS 764.40;
  sFr 108.00 }, 1993.

\bibitem{Michael}
{\sc M.~Hutchings}, {\em {Floer homology of families. I.}}, Algebr. Geom.
  Topol., 8 (2008), pp.~435--492.

\bibitem{symp.str.fiber}
{\sc F.~Lalonde and D.~McDuff}, {\em {Symplectic structures on fiber
  bundles.}}, Topology, 42 (2003), pp.~309--347.

\bibitem{Virt.Mod.cycle}
{\sc J.~Li and G.~Tian}, {\em {Virtual moduli cycles and Gromov-Witten
  invariants of algebraic varieties.}}, J. Am. Math. Soc., 11 (1998),
  pp.~119--174.

\bibitem{MS}
{\sc D.~McDuff and D.~Salamon}, {\em $J$--holomorphic curves and symplectic
  topology}, no.~52 in American Math. Society Colloquium Publ., Amer. Math.
  Soc., 2004.

\bibitem{MSlim}
{\sc D.~McDuff and J.~Slimowitz}, {\em Hofer--{Z}ehnder capacity and length
  minimizing {H}amiltonian paths}, Geom. Topol., 5 (2001), pp.~799--830.

\bibitem{Micallef.white}
{\sc M.~J. Micallef and B.~White}, {\em {The structure of branch points in
  minimal surfaces and in pseudoholomorphic curves.}}, {Annals of Mathematics},
  139 (1995), pp.~35--85.

\bibitem{Morse.theory}
{\sc J.~Milnor}, {\em Morse theory}, Based on lecture notes by M. Spivak and R.
  Wells. Annals of Mathematics Studies, No. 51, Princeton University Press,
  Princeton, N.J., 1963.

\bibitem{MM}
{\sc J.~W. Milnor and J.~C. Moore}, {\em On the structure of {H}opf algebras},
  Ann. of Math. $(2)$, 81 (1965), pp.~211--264.

\bibitem{mitchell}
{\sc S.~A. Mitchell}, {\em {A filtration of the loops on SU(N) by Schubert
  varieties.}}, Math. Z., 193 (1986), pp.~347--362.

\bibitem{Segal}
{\sc A.~Pressley and G.~Segal}, {\em Loop groups}, Oxford Mathematical
  Monographs,  (1986).

\bibitem{rez}
{\sc A.~G. Reznikov}, {\em Characteristic classes in symplectic topology},
  Selecta Math. $($N.S.$)$, 3 (1997), pp.~601--642.
\newblock \ \ Appendix D by L\,Katzarkov.

\bibitem{Karea}
{\sc Y.~Savelyev}, {\em {G}romov {K}-area and jumping curves in
  $\mathbb{CP}^n$}.

\bibitem{GS}
\leavevmode\vrule height 2pt depth -1.6pt width 23pt, {\em Quantum
  characteristic classes and the {H}ofer metric}, Geometry Topology,  (2008).

\bibitem{virtmorse}
\leavevmode\vrule height 2pt depth -1.6pt width 23pt, {\em {Virtual Morse
  theory on $\Omega $Ham$(M, \omega)$.}}, J. Differ. Geom., 84 (2010),
  pp.~409--425.

\bibitem{stablemaps}
\leavevmode\vrule height 2pt depth -1.6pt width 23pt, {\em On configuration
  spaces of stable maps}, arXiv:1104.5440,  (2011).

\bibitem{Seidel}
{\sc P.~Seidel}, {\em $\pi_1$ of symplectic automorphism groups and invertibles
  in quantum homology rings}, Geom. Funct. Anal., 7 (1997), pp.~1046--1095.

\end{thebibliography}
\end {document}